\UseRawInputEncoding
\documentclass[12pt]{article}

\usepackage{dsfont}
\usepackage{amsfonts,amsmath,amsthm}
\usepackage{amssymb}
\usepackage{algorithm}
\usepackage{algpseudocode}
\usepackage{xcolor}
\usepackage{graphicx}
\usepackage{stmaryrd}        
\usepackage{multirow}

\usepackage[paper=a4paper,dvips,top=2cm,left=2cm,right=2cm,
foot=1cm,bottom=4cm]{geometry}

\newcommand{\ve}{{\bf e}}

\begin{document}
	
	\title{Biquadratic Tensors: Eigenvalues and Structured Tensors}
	\author{Liqun Qi\footnote{Department of Applied Mathematics, The Hong Kong Polytechnic University, Hung Hom, Kowloon, Hong Kong.
			({\tt maqilq@polyu.edu.hk}).}
		\and { \
			Chunfeng Cui\footnote{LMIB of the Ministry of Education, School of Mathematical Sciences, Beihang University, Beijing 100191 China.
				({\tt chunfengcui@buaa.edu.cn}).}
		}
	}
	\date{\today}
	\maketitle
	
	\begin{abstract}
		The covariance tensors in statistics{, elasticity tensor in solid {mechanics}, Riemann curvature tensor in relativity theory} are {all} biquadratic tensors that are weakly symmetric, but not symmetric in general.
		Motivated by this, in this paper, we consider nonsymmetric biquadratic tensors, and study possible conditions and algorithms for identifying positive semi-definiteness and definiteness of such biquadratic tensors.   We extend M-eigenvalues to nonsymmetric biquadratic tensors, prove that a general biquadratic tensor has at least one M-eigenvalue, and show that a general biquadratic tensor is positive semi-definite if and only if all of its M-eigenvalues are nonnegative, and a general biquadratic tensor is positive definite if and only if all of its M-eigenvalues are positive.
		We present a Gershgorin-type theorem for biquadratic tensors, and show that (strictly) diagonally  {dominated  biquadratic} tensors are positive semi-definite (definite).      We introduce {Z-biquadratic tensors,}  M-biquadratic tensors, strong M-biquadratic tensors, {B$_0$}-biquadratic tensors and B-biquadratic tensors.  We show that  {M-biquadratic} tensors and symmetric B$_0$-biquadratic tensors are positive semi-definite, and that {strong} M-biquadratic tensors and symmetric B-biquadratic tensors are positive definite.    A {Riemannian} LBFGS method  for computing the smallest M-eigenvalue of a general biquadratic tensor is presented.  Numerical results are reported.

		\medskip


		\textbf{Key words.} Biquadratic tensors, M-eigenvalues, positive semi-definiteness, Gershgorin-type theorem, diagonally dominated biquadratic tensors, M-biquadratic tensors, B-biquadratic tensors.
		
		\medskip
		\textbf{AMS subject classifications.} 47J10, 15A18, 47H07, 15A72.
	\end{abstract}

	\renewcommand{\Re}{\mathds{R}}
	\newcommand{\rank}{\mathrm{rank}}
	\newcommand{\X}{\mathcal{X}}
	\newcommand{\A}{\mathcal{A}}
	\newcommand{\I}{\mathcal{I}}
	\newcommand{\B}{\mathcal{B}}
	\newcommand{\C}{\mathcal{C}}
	\newcommand{\D}{\mathcal{D}}
	\newcommand{\LL}{\mathcal{L}}
	\newcommand{\OO}{\mathcal{O}}
	\newcommand{\e}{\mathbf{e}}
	\newcommand{\0}{\mathbf{0}}
	\newcommand{\1}{\mathbf{1}}
	\newcommand{\dd}{\mathbf{d}}
	\newcommand{\ii}{\mathbf{i}}
	\newcommand{\jj}{\mathbf{j}}
	\newcommand{\kk}{\mathbf{k}}
	\newcommand{\va}{\mathbf{a}}
	\newcommand{\vb}{\mathbf{b}}
	\newcommand{\vc}{\mathbf{c}}
	\newcommand{\vq}{\mathbf{q}}
	\newcommand{\vg}{\mathbf{g}}
	\newcommand{\pr}{\vec{r}}
	\newcommand{\pc}{\vec{c}}
	\newcommand{\ps}{\vec{s}}
	\newcommand{\pt}{\vec{t}}
	\newcommand{\pu}{\vec{u}}
	\newcommand{\pv}{\vec{v}}
	\newcommand{\pn}{\vec{n}}
	\newcommand{\pp}{\vec{p}}
	\newcommand{\pq}{\vec{q}}
	\newcommand{\pl}{\vec{l}}
	\newcommand{\vt}{\rm{vec}}
	\newcommand{\x}{\mathbf{x}}
	\newcommand{\vx}{\mathbf{x}}
	\newcommand{\vy}{\mathbf{y}}
	\newcommand{\vu}{\mathbf{u}}
	\newcommand{\vv}{\mathbf{v}}
	\newcommand{\y}{\mathbf{y}}
	\newcommand{\vz}{\mathbf{z}}
	\newcommand{\vp}{\mathbf{p}}
	\newcommand{\T}{\top}
	\newcommand{\R}{\mathcal{R}}
	
	\newtheorem{Thm}{Theorem}[section]
	\newtheorem{Def}[Thm]{Definition}
	\newtheorem{Ass}[Thm]{Assumption}
	\newtheorem{Lem}[Thm]{Lemma}
	\newtheorem{Prop}[Thm]{Proposition}
	\newtheorem{Cor}[Thm]{Corollary}
	\newtheorem{example}[Thm]{Example}
	\newtheorem{remark}[Thm]{Remark}
	
	\section{Introduction}

	Let $m, n$ be positive integers and $m, n \ge 2$.   We call a real fourth order  $(m \times n \times m \times n)$-dimensional tensor $\A = (a_{i_1j_1i_2j_2}) \in \Re^{m \times n \times m \times n}$ a {\bf biquadratic tensor}.  This is different from  \cite{QHZX21}. Denote $[n] := \{ 1, {\dots,} n \}$.  If for all $i_1, i_2 \in [m]$ and $j_1, j_2\in [n]$, we have
	$$a_{i_1j_1i_2j_2} = a_{i_2j_2i_1j_1},$$
	then we say that $\A$ is {weakly symmetric.}   If furthermore for all $i_1, i_2 \in [m]$ {and $j_1, j_2\in [n]$, we have}
	$$a_{i_1j_1i_2j_2} = a_{i_2j_1i_1j_2}{=a_{i_1j_2i_2j_1}},$$
	then we say $\A$ is {symmetric.}
	{In fact, {all} the biquadratic tensors in \cite{QHZX21} are   symmetric biquadratic tensors in this paper.}
	{Denote} the set of all biquadratic tensors in $\Re^{m \times n \times m \times n}$ by $BQ(m, n)$,  the set of all {weakly} symmetric biquadratic tensors in $BQ(m, n)$, by {$WBQ(m, n)$,} and the set of all symmetric biquadratic tensors in $BQ(m, n)$, by $SBQ(m, n)$, {respectively}.
	Then $BQ(m, n)$, $WBQ(m, n)$, {and} $SBQ(m, n)$ are {all} linear spaces.

	A {biquadratic} tensor {$\A\in BQ(m,n)$} is called {\bf positive semi-definite} if for any $\x \in \Re^m$ and $\y \in \Re^n$,
	\begin{equation}\label{equ:PSD}
		{f(\x, \y) \equiv} \langle \A, \x \circ \y \circ \x \circ \y \rangle \equiv \sum_{i_1, i_2 =1}^m \sum_{j_1, j_2 = 1}^n a_{i_1j_1i_2j_2}x_{i_1}y_{j_1}x_{i_2}y_{j_2} \ge 0.
	\end{equation}
	The tensor $\A$ is called an SOS (sum-of-squares) biquadratic tensor if $f(\x,\y)$ can be written as a sum of squares.
	A {biquadratic} tensor {$\A\in BQ(m,n)$} is called {\bf positive definite} if for any $\x \in \Re^m, \x^\top \x = 1$ and $\y \in \Re^n, \y^\top \y = 1$,
	$${f(\x, \y) \equiv} \langle \A, \x \circ \y \circ \x \circ \y \rangle \equiv \sum_{i_1, i_2 =1}^m \sum_{j_1, j_2 = 1}^n a_{i_1j_1i_2j_2}x_{i_1}y_{j_1}x_{i_2}y_{j_2} > 0.$$
	
	In \cite{HLW21}, bi-block tensors were studied.  A bi-block tensor may have order higher than $4$, but its dimension is uniformly $n$. Recall that a tensor $\A = (a_{i_1\cdots i_m})$ with $i_1, \cdots, i_m \in [n]$ is called a cubic tensor \cite{KB09}.    Hence, bi-block tensors are special  cubic tensors.   On the other hand,  biquadratic tensors are not cubic tensors.  Thus, they are different.

	Biquadratic tensors arise from solid mechanics, statistics, quantum physics, spectral graph theory and polynomial theory. In the next section, we review such application background of biquadratic tensors.  In particular, we point out that
	the covariance tensors in statistics{, elasticity tensor in solid {mechanics}, Riemann curvature tensor in relativity theory} are {all} biquadratic tensors that are weakly symmetric, but not symmetric in general.
	This motivates us to consider nonsymmetric biquadratic tensors, and study possible conditions and algorithms for identifying positive semi-definiteness and definiteness of such biquadratic tensors.
	
	In the study of strong ellipticity condition of the elasticity tensor in solid mechanics, in 2009, Qi, Dai and Han \cite{QDH09} introduced M-eigenvalues for symmetric biquadratic tensors.
	In Section 3, we extend this definition to nonsymmetric biquadratic tensors.   We show that
	for any biquadratic tensor, an M-eigenvalue defined in this paper always exists.  Furthermore, we show that a general biquadratic tensor is positive semi-definite if and only if all of its M-eigenvalues are nonnegative, and a general biquadratic tensor is positive definite if and only if all of its M-eigenvalues are positive.  This creates a powerful tool for identifying positive semi-definiteness and definiteness of general biquadratic tensors.
	
	In Section 4, we present a Gershgorin-type theorem for biquadratic tensors.  We introduce diagonally dominated biquadratic tensors and strictly diagonally dominated biquadratic tensors.  Then we show that diagonally  {dominated biquadratic} tensors are positive semi-definite, and strictly diagonally  {dominated biquadratic} tensors are positive definite.
	
	
	In  {Section 5}, we introduce {Z-biquadratic tensors,} M-biquadratic tensors, strong M-biquadratic tensors, {B$_0$}-biquadratic tensors and B-biquadratic tensors.  We show {that M-biquadratic} tensors and symmetric B$_0$-biquadratic tensors are positive semi-definite, and  {that strong} M-biquadratic tensors and symmetric B-biquadratic tensors are positive definite.  
	A {Riemannian} LBFGS method
	{for computing the smallest M-eigenvalue of a general biquadratic tensor is presented in Section 6.}  Convergence analysis of this algorithm is also given there. Numerical results are reported in Section 7.   {Some final remarks are made in Section 8.}
	
	

	\section{Application Backgrounds of Biquadratic Tensors}

	\subsection{The Covariance Tensor in Statistics}

	The covariance matrix is a key concept in statistics and machine learning. It is a square matrix that contains the covariances between multiple random variables. The diagonal elements represent the variances of each variable, reflecting their individual dispersion. The off-diagonal elements represent the covariances between pairs of variables, indicating their linear relationships. The covariance matrix is symmetric and positive semi-definite, providing a comprehensive view of the interdependencies among variables.
	However, when the variable takes the form of a matrix, the corresponding covariance matrix transforms into a covariance tensor.

	Let $X=(X_{ij})\in\Re^{{m\times n}}$ be a random matrix  with each element being a random variable.
	We denote the mean and variance of the element $X_{ij}$ as $\mathbb E(X_{ij})=\mu_{ij}$ and $\text{var}(X_{ij})=\sigma_{ij}$ (${i\in [m], j\in [n]}$), respectively.  The covariance with another element $X_{kl}$ is denoted as $\text{cov}(X_{ij},X_{kl})=\sigma_{ijkl}$. {Subsequently, it is formulated as a fourth-order covariance tensor.}
	The   {fourth order} covariance tensor  {was} proposed in \cite{BLQZ12} for portfolio selection problems and for  group
	identification in \cite{CHHS25}, respectively.
	
	For any random matrix $X\in\Re^{{m\times n}}$, its fourth-order covariance tensor is defined as $\A=(\sigma_{i_1j_1i_2j_2}) \in\Re^{{m\times n}\times {m\times n}}$, where
	\begin{equation}\label{eq:4cov_ten}
		\sigma_{i_1j_1i_2j_2} = \mathbb  E\left((X_{i_1j_1}-\mu_{i_1j_1})(X_{i_2j_2}-\mu_{i_2j_2})\right).
	\end{equation}
	Then  $\A$ is a {weakly symmetric} biquadratic tensor.  $\A$ may   be nonsymmetric.

	\begin{Prop}\label{covtensor:PSD}
		The fourth-order covariance tensor defined in  \eqref{eq:4cov_ten} is   positive semi-definite.
	\end{Prop}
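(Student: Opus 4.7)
The plan is to verify positive semi-definiteness directly from the definition by recognizing the relevant quadratic form as the expectation of a square. Fix arbitrary vectors $\x \in \Re^m$ and $\y \in \Re^n$, and substitute the definition \eqref{eq:4cov_ten} into the expression $f(\x,\y) = \sum_{i_1,i_2,j_1,j_2} \sigma_{i_1j_1i_2j_2} x_{i_1} y_{j_1} x_{i_2} y_{j_2}$ from \eqref{equ:PSD}. Since the sum is finite, I can pull the expectation outside by linearity.

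Next, I would introduce the scalar random variable $Z := \sum_{i=1}^m \sum_{j=1}^n (X_{ij} - \mu_{ij}) x_i y_j$, which is essentially $\x^\top (X - \mathbb{E}X) \y$. The key algebraic observation is then that
\[
f(\x,\y) = \mathbb{E}\left[\left(\sum_{i_1,j_1}(X_{i_1j_1} - \mu_{i_1j_1}) x_{i_1} y_{j_1}\right)\left(\sum_{i_2,j_2}(X_{i_2j_2} - \mu_{i_2j_2}) x_{i_2} y_{j_2}\right)\right] = \mathbb{E}[Z^2].
\]
Since $Z^2 \ge 0$ almost surely, its expectation is nonnegative, which yields $f(\x,\y) \ge 0$ for every $\x,\y$, and hence $\A$ is positive semi-definite in the sense of \eqref{equ:PSD}.

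There is essentially no obstacle here: the only thing to be careful about is swapping the summation and the expectation, which is trivially justified for a finite sum of integrable random variables (and the integrability is implicit in the assumption that the covariances exist). The proof is therefore a one-line recognition of $f(\x,\y)$ as a second moment.
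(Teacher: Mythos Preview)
Your proposal is correct and is essentially identical to the paper's own proof: both substitute the definition of $\sigma_{i_1j_1i_2j_2}$, use linearity to move the expectation outside the finite sum, and recognize the resulting expression as $\mathbb{E}\big[(\x^\top (X-\mu)\y)^2\big]\ge 0$. There is nothing to add.
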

	\begin{proof}
		For any $\x \in \Re^m$ and $\y \in \Re^n$, we have
		\begin{eqnarray*}
			&& \langle \A, \x \circ \y \circ \x \circ \y \rangle\\ &\equiv& \sum_{i_1, i_2 =1}^m \sum_{j_1, j_2 = 1}^n  \mathbb E\left((X_{i_1j_1}-\mu_{i_1j_1})(X_{i_2j_2}-\mu_{i_2j_2})\right)x_{i_1}y_{j_1}x_{i_2}y_{j_2}\\
			&=&\mathbb E\left( \sum_{i_1, i_2 =1}^m \sum_{j_1, j_2 = 1}^n  \mathbb (X_{i_1j_1}-\mu_{i_1j_1})(X_{i_2j_2}-\mu_{i_2j_2})x_{i_1}y_{j_1}x_{i_2}y_{j_2}\right)\\
			&=&\mathbb E\left( \vx^\top (X-{\bf \mu})\vy\right)^2\\
			&\ge & 0
		\end{eqnarray*}
		This completes the proof.
	\end{proof}
	
	Let $X^{(t)}$ represent the $t$th observed matrix data. We assume that each $X^{(t)}$ is an independent and identically distributed (iid) sample of the  random matrix $X=(X_{ij})\in\Re^{{m\times n}}$.
	Then  the estimated mean is $\bar{X}=(\bar{X}_{ij})\in\Re^{{m\times n}}$, where $\bar{X}_{ij}=\frac1T \sum_{t=1}^T X_{ij}^{(t)}$. A natural estimate for the covariance can be formulated as $\hat{\A}=(\hat{\sigma})_{i_1j_1i_2j_2} \in\Re^{{m\times n}\times {m\times n}}$, where
	\begin{equation}\label{eq:4cov_est}
		\hat{\sigma}_{ijkl}=\frac1T\sum_{t=1}^T \left(X_{ij}^{(t)} - \bar X_{ij}\right)\left(X_{kl}^{(t)} - \bar X_{kl}\right).
	\end{equation}

	\begin{Prop}\label{prop:psd_cov}
		The fourth-order covariance tensor defined in  \eqref{eq:4cov_est} is a positive semi-definite biquadratic tensor.
	\end{Prop}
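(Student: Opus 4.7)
The plan is to adapt the argument used for Proposition \ref{covtensor:PSD} almost verbatim, replacing the expectation $\mathbb{E}$ with the empirical average $\frac{1}{T}\sum_{t=1}^T$. Both are nonnegative linear functionals on random variables, so the same ``sum of squares'' identity should go through.

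Concretely, I would fix arbitrary $\x \in \Re^m$ and $\y \in \Re^n$ and plug the definition \eqref{eq:4cov_est} of $\hat{\sigma}_{i_1 j_1 i_2 j_2}$ into the quadratic form $\langle \hat{\A}, \x \circ \y \circ \x \circ \y \rangle$. The next step is to interchange the finite sum over $t$ with the sums over $i_1, i_2, j_1, j_2$, which is legitimate because everything is a finite sum. After pulling $\frac{1}{T}\sum_{t=1}^T$ outside, the inner expression factors as
\[
  \left(\sum_{i_1=1}^m \sum_{j_1=1}^n (X_{i_1 j_1}^{(t)} - \bar X_{i_1 j_1}) x_{i_1} y_{j_1}\right)\left(\sum_{i_2=1}^m \sum_{j_2=1}^n (X_{i_2 j_2}^{(t)} - \bar X_{i_2 j_2}) x_{i_2} y_{j_2}\right),
\]
which is a perfect square, namely $\bigl(\x^\top (X^{(t)} - \bar X)\y\bigr)^2$. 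Summing nonnegative quantities over $t$ and dividing by $T > 0$ yields a nonnegative value, giving positive semi-definiteness.

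There is essentially no serious obstacle: the argument is structurally identical to the one for the population covariance tensor, with $\mathbb{E}$ replaced by $\frac{1}{T}\sum_{t=1}^T$. The only remark worth making is that weak symmetry $\hat{\sigma}_{i_1 j_1 i_2 j_2} = \hat{\sigma}_{i_2 j_2 i_1 j_1}$ is immediate from the definition, so $\hat{\A}$ indeed belongs to $WBQ(m,n)$ and the statement ``is a positive semi-definite biquadratic tensor'' is meaningful. One could alternatively derive the result as a corollary of Proposition \ref{covtensor:PSD} by viewing the empirical measure on $\{X^{(1)},\ldots,X^{(T)}\}$ as a probability distribution and applying the population result to this discrete random matrix; this reframing makes clear that no additional work is required beyond what has already been done.
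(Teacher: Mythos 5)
Your proof is correct and takes essentially the same approach as the paper: the paper states the result is proven similarly to Proposition \ref{covtensor:PSD} and omits the details, and your argument simply carries out that intended adaptation by replacing $\mathbb{E}$ with the empirical average $\frac{1}{T}\sum_{t=1}^T$ and factoring the resulting expression as $\frac{1}{T}\sum_{t=1}^T \bigl(\x^\top (X^{(t)}-\bar X)\y\bigr)^2 \ge 0$.
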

	{\begin{proof}
			This result can be proven similar with that of Proposition~\ref{covtensor:PSD}, and for brevity, we omit the detailed proof here.
	\end{proof}}
	
	
	
	%
	
	{\subsection{The Elasticity Tensor in Solid Mechanics}}
	
	The field equations for a homogeneous, compressible, nonlinearly elastic material in two or three dimensions, pertaining to static problems in the absence of body forces, can be formulated as follows \cite{KS76}:
	\begin{equation}\label{eq:elasticity_tensor}
		a_{ijkl} \left(1+\nabla \vu \right) u_{k,lj} = 0,	
	\end{equation}
	Here, $u_i({\bf X})$ (with $i=1,2$ or $i=1,2,3$) represents the displacement field, where $\bf X$ denotes the coordinate of a material point in its reference configuration. Given  $n=2$ or $n=3$, the tensor   $\A = (a_{i_1j_1i_2j_2})\in BQ(n,n)$   signifies the elasticity tensor and
	\begin{equation*}
		a_{ijkl}({\bf F}) = a_{klij}({\bf F}) = \frac{\partial^2 W({\bf F})}{\partial F_{ij}\partial F_{kl}}, \  {\bf F} = 1+\nabla \vu.
	\end{equation*}
	{The tensor of elastic moduli is invariant with respect of
		permutations of indices as follows \cite{ZR16}
		\[a_{ijkl} = a_{jikl} = a_{ijlk}.\]
		For hyperelastic materials $a_{ijkl}$ has also the property
		\[a_{ijkl} = a_{klij}.\]}
	Then $\A$ is a weakly symmetric biquadratic tensor, {while its  symmetry may not be guaranteed.}   The elasticity tensor is the most well-known tensor in solid mechanics and engineering \cite{Ny85}.
	The above equations 	are strongly elliptic if and only if  for any $\x \in \Re^n$ and $\y \in \Re^n$, \eqref{equ:PSD} holds.
	Several methods were proposed for verifying strongly ellipticity of  the elasticity tensor \cite{CCZ21, HLW21,LCLL22,LLL19}.
	
	In solid mechanics, the Eshelby tensor is also a biquadratic tensor. The Eshelby inclusion problem is one of the hottest topics in modern solid mechanics \cite{ZHHZ10}.
	
	In 2009, Qi, Dai, and Han \cite{QDH09} proposed an optimization method for tackling  the problem of the
	strong ellipticity and also to give an algorithm for computing the most possible directions along which the strong ellipticity can fail.
	{Subsequently, a practical method for computing the largest M-eigenvalue of a fourth-order partially symmetric tensor was proposed by Wang et al. \cite{WQZ09}. Later,   \cite{LLL19, LCLL22} provided bounds of M-eigenvalues and strong ellipticity conditions for elasticity tensors.}

	\subsection{The   Riemann Curvature Tensor in Relativity Theory}
	
	In the {domain} of differential geometry, the Riemann curvature tensor, or  {Riemann-Christoffel} tensor (named after Bernhard Riemann and Elwin Bruno Christoffel), stands as the preeminent method for describing the curvature of Riemannian manifolds \cite{Ri06}. This tensor assigns a specific tensor to each point on a Riemannian manifold, thereby constituting a tensor field. Essentially, it serves as a local invariant of Riemannian metrics, quantifying the discrepancy in the commutation of second covariant derivatives. Notably, a Riemannian manifold possesses zero curvature if and only if it is flat, meaning it is locally isometric to Euclidean space.

	Let $(M,g)$ be a Riemannian manifold, and $\mathfrak{X}(M)$  be the space of all vector fields on $M$. The Riemann curvature tensor is defined as a map $\mathfrak{X}(M)\times \mathfrak{X}(M)\times \mathfrak{X}(M)\rightarrow \mathfrak{X}(M)$ by the following formula
	\begin{equation*}
		R(X,Y)Z=\nabla_X\nabla_YZ-\nabla_Y\nabla_XZ-\nabla_{[X,Y]}Z,
	\end{equation*}	
	where $\nabla$ is  {the Levi-Civita} connection, $[X,Y]$ is the Lie bracket of vector fields. It turns out that the right-hand side actually only depends on the value of the vector fields $X,Y,Z$ at a given point. Hence, $R$ is a (1,3)-tensor field. 
	By using the  tensor index notation and the Christoffel symbols, we have
	\begin{equation*}
		R^{i}_{jkl}= \partial_{k}\Gamma^{i}_{lj}-\partial_{l}\Gamma^{i}_{kj} +   \Gamma^{i}_{kp}\Gamma^{p}_{lj} - \Gamma^{i}_{lp}\Gamma^{p}_{kj},
	\end{equation*}
	where $\Gamma^{i}_{lj}$ are Christoffel symbols  of the first kind. For convenience, we could also write $R^{i}_{jkl}$ as $R_{ijkl}$.
	
	Denote $\R=(R_{i_1j_1i_2j_2})$ as the Riemann curvature tensor. Then it is a biquadratic tensor.
	The curvature tensor has the following symmetry properties \cite{XQW18}
	\[R_{ijkl}=-R_{jikl}=-R_{ijlk}=R_{klij},  R_{ijkl}+R_{iljk}+R_{iklj }=0.\]
	Therefore, the  Riemann curvature tensor is a weakly symmetric biquadratic tensor, {and it may not be symmetric}.

	
	\subsection{Bipartite Graphs and Graph Matching}
	Bipartite matching, or bipartite graph matching, is a fundamental problem in graph theory and combinatorics. It involves finding an optimal way to pair nodes from two disjoint sets in a bipartite graph, ensuring that no two pairs share nodes from the same set. This problem arises in various real-world scenarios, such as job assignment, task scheduling, and network flow optimization  \cite{YYLDZY16}.

	Consider a bipartite graph with two subgraphs $G_1=(V_1,E_1)$ and $G_2=(V_2,E_2)$,
	where $V_1$ and $V_2$ are {disjoint} sets of   points with $V_1=m$, $V_2=n$,  and $E_1$, $E_2$ are   sets of edges. The bipartite graph matching aims to find the best correspondence (also referred to as  matching) between $V_1$ and $V_2$  with the maximum matching score.
	Specifically, let $X=(x_{ij})\in\Re^{m\times n}$ be the assignment matrix between $V_1$ and $V_2$, i.e., $x_{ij}=1$ if $i\in V_1$ is assigned to $j\in V_2$ and $x_{ij}=0$ otherwise.
	Two edges $(i_1,j_1)\in E_1$ and $(i_2,j_2)\in E_2$ are said to be matched if $x_{i_1j_1}=x_{i_2j_2}=1$. Let $a_{i_1j_1i_2j_2}$ be the matching score between $(i_1,j_1)$ and $(i_2,j_2)$. Then $\A=(a_{i_1j_1i_2j_2})\in BQ(m,n)$ is a biquadratic tensor.  Assume that $m\ge n$. The graph matching problem takes the following form
	\begin{equation*}
		\max_{X\in \{0,1\}^{m\times n}} \ \sum_{(i_1,j_1)\in E_1}\sum_{(i_2,j_2)\in E_2} a_{i_1j_1i_2j_2} x_{i_1j_1}x_{i_2j_2} \text{ s.t. } X\1=\1.
	\end{equation*}
	It is commonly assumed that {$a_{i_1j_1i_2j_2}\ge 0$ and $a_{i_1j_1i_2j_2}=a_{i_2j_2i_1j_1}$, i.e.,} $\A$ is both weakly symmetric and nonnegative.
	
	Adjacency tensors and Laplacian tensors are basic tools in spectral hypergraph theory \cite{CD12, QL17}.  
	{Given a bipartite graph,} the corresponding adjacency tensors and Laplacian tensors  {can also be formulated as} biquadratic tensors.


	\subsection{Biquadratic Polynomials and Polynomial Theory}
	
	Given a  biquadratic {tensor} $\A\in BQ(m,n)$, the	biquadratic polynomial   ${f(\x, \y) \equiv}$ $\langle \A, \x \circ \y \circ \x \circ \y \rangle$ constitutes a significant branch within the realm of polynomial theory.

	In 2009, Ling et al. \cite{LNQY10} studied the  so-called biquadratic optimization over unit spheres,
	\begin{equation*}
		\min_{\vx\in\Re^m,\vy\in\Re^n}\quad  \langle \A, \x \circ \y \circ \x \circ \y \rangle \text{  and  } \|\vx\|=1, \|\vy\|=1.
	\end{equation*}
	Then they    presented various approximation methods based on  semi-definite  programming  relaxations.
	{In 2012, Yang and Yang \cite{YY12} studied the biquadratic optimization with wider constrains. They relaxed the original problem to its semi-definite  programming problem, discussed the approximation ratio between them, and showed that the relaxed problem is tight under some conditions.}
	In 2016, {O'Rourke} et al. \cite{RSRS18} subsequently leveraged the theory of biquadratic optimization to address the joint signal-beamformer optimization problem and proposed a  {semi-definite} relaxation to formulate a more manageable version of the problem.  This demonstrates the significance of investigating biquadratic optimization.

	One important problem in polynomial theory is the Hilbert's 17th Problem. In 1900, German mathematician David Hilbert  listed   23 unsolved mathematical challenges proposed   at the International Congress of Mathematicians. Among these problems, Hilbert's 17th Problem stands out as it relates to the representation of polynomials. Specifically, Hilbert's 17th Problem asks whether every nonnegative polynomial can be represented as a sum of squares (SOS) of rational functions.
	The Motzkin polynomial is the first example of a nonnegative  polynomial that is not an SOS.
	Hilbert proved that in several cases, every  {nonnegative} polynomial is an SOS, including
	univariate polynomials, quadratic polynomials in any number of variables, bivariate quartic polynomials  (i.e., polynomials of degree 4 in two variables)  {\cite{Hi88}}. 
	In general, a quartic nonnegative polynomial may not be represented as an SOS, such as the  {Robinson polynomial and the} Choi-Lam polynomial.
	The question at hand is whether a  {given} biquadratic  nonnegative polynomial can be represented as an SOS?
	
	If $\A\in BQ(m,n)$ is  a nonnegative diagonal biquadratic tensor, then we have ${f(\x, \y) \equiv} \sum_{i=1}^m\sum_{j=1}^n a_{ijij} x_i^2x_j^2$, which is an SOS expression.
	In the following, we present a sufficient condition for the nonnegative biquadratic polynomial   to be SOS.
	
	\begin{Prop}\label{prop_SOS}
		Given a  biquadratic tensor  $\A=(a_{i_1j_1i_2j_2})\in BQ(m,n)$ with $m,n\ge 2$ and the	biquadratic polynomial   ${f(\x, \y) \equiv} \langle \A, \x \circ \y \circ \x \circ \y \rangle$. Suppose that there exist matrices $A^{(k)}=\left(a_{ij}^{(k)}\right)\in\mathbb R^{m\times n}$ such that $\A=\sum_{k=1}^K A^{(k)} \otimes A^{(k)}$, i.e., $a_{i_1j_1i_2j_2} = \sum_{k=1}^K a_{i_1j_1}^{(k)}a_{i_2j_2}^{(k)}$, then $f$ can be  {expressed} as an SOS.
	\end{Prop}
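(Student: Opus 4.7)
The plan is to unwind the decomposition $\A = \sum_{k=1}^K A^{(k)} \otimes A^{(k)}$ directly inside the expression for $f(\x, \y)$ and observe that it collapses into a sum of squares of bilinear forms in the joint variable $(\x, \y)$. That is, writing out entries, I substitute $a_{i_1 j_1 i_2 j_2} = \sum_{k=1}^K a^{(k)}_{i_1 j_1} a^{(k)}_{i_2 j_2}$ into
\[
f(\x, \y) = \sum_{i_1,i_2=1}^m \sum_{j_1,j_2=1}^n a_{i_1 j_1 i_2 j_2}\, x_{i_1} y_{j_1} x_{i_2} y_{j_2},
\]
swap the order of summation to factor out the index $k$, and separate the $(i_1, j_1)$ sum from the $(i_2, j_2)$ sum.

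The first key step is that after this swap the inner double sum factorizes as $\bigl(\sum_{i,j} a^{(k)}_{ij} x_i y_j\bigr)\bigl(\sum_{i,j} a^{(k)}_{ij} x_i y_j\bigr)$, because both factors share the same coefficient matrix $A^{(k)}$. The second key step is to recognize each factor as the bilinear form $\x^\top A^{(k)} \y$, so that
\[
f(\x, \y) = \sum_{k=1}^K \bigl( \x^\top A^{(k)} \y \bigr)^2.
\]
Finally, I note that each $\x^\top A^{(k)} \y$ is an ordinary polynomial (bihomogeneous of bidegree $(1,1)$) in the combined set of variables $(x_1, \dots, x_m, y_1, \dots, y_n)$, hence its square is a polynomial square, and thus $f$ is literally a sum of squares of polynomials in the joint variable, i.e., an SOS expression in the sense defined earlier.

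There is really no hard step here; the argument is a direct algebraic identity once the tensor decomposition is written out. The only point worth flagging is the convention that ``SOS'' refers to squares of polynomials in $(\x, \y)$ jointly (not, e.g., squares of forms separately in $\x$ and $\y$), which is what makes the identity suffice and is consistent with the usage already in place in the paper. No regularity, convergence, or positivity auxiliary lemmas are needed, so the proof can be presented in a short displayed computation.
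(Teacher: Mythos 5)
Your proof is correct and follows essentially the same route as the paper's: substitute the decomposition $\A = \sum_k A^{(k)} \otimes A^{(k)}$, factor the double sum to obtain $f(\x,\y) = \sum_{k=1}^K \bigl(\x^\top A^{(k)} \y\bigr)^2$, and conclude this is an SOS. Your write-up is just slightly more explicit about the index manipulation and the SOS convention than the paper's one-line computation.
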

	\begin{proof}
		Suppose that   $\A=\sum_{k=1}^K A^{(k)} \otimes A^{(k)}$. Then for any $\vx\in\Re^m$ and $\vy\in\Re^n$, we have
		\begin{eqnarray*}
			f(\vx,\vy) &=& \langle \A, \x \circ \y \circ \x \circ \y \rangle\\
			&=& \sum_{k=1}^K \left(\vx^\top A^{(k)} \vy\right)^2,
		\end{eqnarray*}
		which is an SOS. This completes the proof.
	\end{proof}
	
	In fact,  the nonnegative diagonal biquadratic tensor $\A=(a_{i_1j_1i_2j_2})$ may be rewritten as
	$$ \A = \sum_{i=1}^m \sum_{j=1}^n \sqrt{a_{ijij}}E_{ij} \otimes E_{ij},$$
	where $E_{ij}\in \{0,1\}^{m\times n}$ contains exactly  one nonzero entry, specifically located at the $(i,j)$th position.
	
	Consider a radar datacube collected over  $L$ pulses with $N$ fast time samples per pulse in $M$ spatial bins. The biquadratic tensor in \cite{RSRS18} could be written as
	\begin{equation*}
		\C = \sum_{q=1}^Q  {\Gamma}_q \otimes {\Gamma}_q^H,
	\end{equation*}
	where $\Gamma_q\in\mathbb C^{NML\times N}$ is the  spatiodoppler response matrix, $Q$ is the number of independent clutter patches.
	If $\Gamma_q$ is in the real field, then $\C$ satisfies the assumptions in Proposition \ref{prop_SOS}.

	\section{Eigenvalues of Biquadratic Tensors}

	Suppose that $\A = (a_{i_1j_1i_2j_2}) \in BQ(m, n)$.   A   {real} number $\lambda$ is called an  {M-eigenvalue} of $\A$ if there are  {real} vectors  $\x = (x_1, {\dots,} x_m)^\top \in {\Re}^m, \y = (y_1, {\dots,} y_n)^\top \in {\Re}^n$ such that the following equations are satisfied:
	For  $i {\in [m]}$,
	\begin{equation} \label{e5}
		\sum_{i_1=1}^m \sum_{j_1, j_2=1}^n a_{i_1j_1ij_2}x_{i_1}y_{j_1}y_{j_2} +	\sum_{i_2=1}^m \sum_{j_1, j_2=1}^n a_{ij_1i_2j_2}y_{j_1}x_{i_2}y_{j_2} = 2\lambda x_i;
	\end{equation}
	{for} $j {\in [n]}$,
	\begin{equation} \label{e6}
		\sum_{i_1,i_2=1}^m\sum_{j_1=1}^n a_{i_1j_1i_2j}x_{i_1}y_{j_1}x_{i_2} +	\sum_{i_1,i_2=1}^m\sum_{j_2=1}^n a_{i_1ji_2j_2}x_{i_1}x_{i_2}y_{j_2} = 2\lambda y_j;
	\end{equation}
	and
	\begin{equation} \label{e7}
		\x^\top \x ={ \y^\top \y} = 1.
	\end{equation}
	Then $\x$ and $\y$ are called the corresponding  {M-eigenvectors}.   
	M-eigenvalues were introduced {for symmetric biquadratic tensors}  in 2009 by Qi, Dai and Han \cite{QDH09},  i..e,
	for  $i {\in [m]}$,
	\begin{equation} \label{e5_sym}
		\sum_{i_2=1}^m \sum_{j_1, j_2=1}^n a_{ij_1i_2j_2}y_{j_1}x_{i_2}y_{j_2} =  \lambda x_i;
	\end{equation}
	{for} $j {\in [n]}$,
	\begin{equation} \label{e6_sym}
		\sum_{i_1,i_2=1}^m\sum_{j_2=1}^n a_{i_1ji_2j_2}x_{i_1}x_{i_2}y_{j_2} =  \lambda y_j.
	\end{equation}
	Subsequently, several numerical methods, including the WQZ method \cite{WQZ09}, semi-definite relaxation method \cite{LNQY10,YY12}, and the shifted inverse  power method \cite{ZLS24,WCWY23}, were proposed.   It is easy to see that if $\A$ is symmetric, then (\ref{e5}) and (\ref{e6}) reduce to the definition of M-eigenvalues {in equations \eqref{e5_sym} and \eqref{e6_sym}} in \cite{QDH09}. Hence, we
	extend M-eigenvalues and M-eigenvectors to nonsymmetric biquadratic tensors here.
	
	We have the following theorem.
	
	\begin{Thm} \label{T3.1}
		Suppose that $\A \in BQ(m, n)$.  Then $\A$ always have M-eigenvalues.  Furthermore, $\A$ is positive semi-definite if and only if all of its M-eigenvalues are nonnegative, $\A$ is positive definite if and only if all of its M-eigenvalues are positive.
	\end{Thm}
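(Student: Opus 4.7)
The plan is to recover the M-eigenvalue system as the KKT conditions of a constrained optimization problem on the product of unit spheres. Set $S := \{(\x,\y) \in \Re^m \times \Re^n : \x^\top \x = \y^\top \y = 1\}$. Since $S$ is compact and $f(\x,\y) = \langle \A, \x \circ \y \circ \x \circ \y\rangle$ is continuous (in fact a polynomial), $f$ attains both a minimum and a maximum on $S$, which immediately gives a pair of critical points.

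At any critical point, one writes the Lagrangian $L(\x,\y,\lambda_1,\lambda_2) = f(\x,\y) - \lambda_1(\x^\top\x - 1) - \lambda_2(\y^\top\y - 1)$. A direct computation of $\partial f/\partial x_i$ reproduces exactly the left-hand side of \eqref{e5}, and $\partial f/\partial y_j$ reproduces the left-hand side of \eqref{e6}, so the KKT stationarity gives \eqref{e5} with $2\lambda_1$ in place of $2\lambda$ and \eqref{e6} with $2\lambda_2$. The key observation is that contracting \eqref{e5}-type equation with $x_i$ and summing yields $2f(\x,\y) = 2\lambda_1$, while contracting the \eqref{e6}-type equation with $y_j$ and summing yields $2f(\x,\y) = 2\lambda_2$. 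Hence at every critical point $\lambda_1 = \lambda_2 = f(\x,\y)$, so the two multipliers coincide and \eqref{e5}--\eqref{e7} hold with a single $\lambda$. This proves existence and, as a byproduct, establishes the fundamental identity
\begin{equation*}
\lambda = f(\x,\y) \quad \text{for every M-eigenpair } (\lambda,\x,\y).
\end{equation*}

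For the characterization of positive semi-definiteness, I would use the above identity together with a standard homogenization. If $\A$ is positive semi-definite, then $f(\x,\y) \ge 0$ on $S$, so $\lambda = f(\x,\y) \ge 0$ for any M-eigenvalue. Conversely, assume every M-eigenvalue is nonnegative. The minimum of $f$ on $S$ is attained at some $(\x^*,\y^*)$, which is a critical point; by the argument above the minimum value is itself an M-eigenvalue, hence nonnegative. For arbitrary $\x \in \Re^m$ and $\y \in \Re^n$, if either is zero then $f(\x,\y)=0$; otherwise write $\x = \|\x\|\,\hat{\x}$, $\y = \|\y\|\,\hat{\y}$ with $(\hat{\x},\hat{\y}) \in S$, and use the biquadratic homogeneity
\begin{equation*}
f(\x,\y) = \|\x\|^2\,\|\y\|^2\, f(\hat{\x},\hat{\y}) \ge 0.
\end{equation*}
The positive definite case is identical, replacing $\ge 0$ with $> 0$ and $\Re^m \setminus \{0\}$, $\Re^n \setminus \{0\}$ as the relevant domain.

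The only subtle point in the proof is the coincidence of the two Lagrange multipliers: a priori the KKT system has two separate multipliers, and nothing in the definition of M-eigenvalue prepares one to identify them. The contraction trick resolves this, and it crucially relies on the fact that each of the two sums on the left-hand side of \eqref{e5}, contracted against $x_i$, produces exactly $f(\x,\y)$ (and analogously for \eqref{e6}); this is where the factor $2$ on the right-hand sides of \eqref{e5} and \eqref{e6} is needed. Once this step is secured, everything else is routine.
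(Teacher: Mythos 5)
Your proposal is correct and follows essentially the same approach as the paper: minimize $f$ over the compact product of unit spheres, extract the M-eigenvalue system from the KKT conditions, and use the contraction identity $\lambda = f(\x,\y)$ to relate the sign of the M-eigenvalues to positive (semi-)definiteness. You are somewhat more careful than the paper on two points that the paper leaves implicit --- namely, that the two Lagrange multipliers $\lambda_1, \lambda_2$ a priori could differ and the contraction argument is what forces $\lambda_1 = \lambda_2 = f(\x,\y)$, and that one needs biquadratic homogeneity to pass from nonnegativity of the minimum on $S$ to positive semi-definiteness on all of $\Re^m \times \Re^n$ --- but the underlying argument is the same.
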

	\begin{proof}
		Consider the optimization problem
		\begin{equation} \label{opt}
			\min \{ f(\x, \y, \x, \y) : \x^\top \x = 1, \y^\top \y = 1 \}.
		\end{equation}
		The feasible set is compact.  The objective function is continuous.   Thus, an optimizer of this problem exists.  Furthermore, the problem satisfies the linear independence constraint qualification.  Hence, at the optimizer, the optimal conditions hold.   This implies that (\ref{e5}) and (\ref{e6}) hold, i.e., an M-eigenvalue always exists.   By (\ref{e5}) and (\ref{e6}), we have
		$$\lambda = f(\x, \y, \x, \y).$$
		Hence, $\A$ is positive semi-definite if and only if all of its M-eigenvalues are nonnegative, $\A$ is positive definite if and only if all of its M-eigenvalues are positive.
	\end{proof}

	Let $\I = (I_{i_1j_1i_2j_2})\in BQ(m,n)$, where 	$$I_{i_1j_1i_2j_2}=\left\{\begin{array}{cl}
		1,& \text{ if } i_1=i_2 \text{ and }j_1=j_2,\\
		0, & \text{otherwise.}
	\end{array}\right.
	$$
	$\I$ is referred to as $M$-identity tensor in {\cite{DLQY20,WWL20}.}
	In the following proposition, we see that $\I$ acts as the identity tensor for biquadratic tensors.
	
	\begin{Prop}\label{prop:A-lmdI}
		Suppose that $\A \in BQ(m, n)$.  Then $\mu -\lambda$ {(resp. $\lambda-\mu$)}  is an M-eigenvalue of $\A - \lambda \I$ {(resp. $\lambda \I-\A)$} if and only if $\mu$ is an M-eigenvalue of $\A$.
	\end{Prop}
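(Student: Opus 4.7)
The plan is a direct substitution argument that exploits the sphere constraints $\x^\top\x=\y^\top\y=1$ in the M-eigenvalue system \eqref{e5}--\eqref{e7}. I would handle the $\A-\lambda\I$ statement first; the $\lambda\I-\A$ case then follows by replacing $\lambda$ with $-\lambda$ (and changing sign) or by an essentially identical computation.

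First I would compute the contribution of $\I$ separately to the two eigenvalue equations. For the $i$-th equation \eqref{e5}, the first sum
$\sum_{i_1,j_1,j_2} I_{i_1j_1 i j_2}x_{i_1}y_{j_1}y_{j_2}$ is nonzero only when $i_1=i$ and $j_1=j_2$, collapsing to $x_i\sum_{j_1}y_{j_1}^2=x_i\|\y\|^2=x_i$; the second sum collapses symmetrically to $x_i$. Thus the $\I$-part of \eqref{e5} is $2x_i$. The same computation for \eqref{e6} gives $2y_j$ on the left. This is the only nontrivial piece, and it is essentially immediate.

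Next, I would subtract $\lambda$ times the identity contribution from each equation. Since the eigenvalue equations are linear in the tensor entries, the left-hand side of \eqref{e5} for $\A-\lambda\I$ with vectors $(\x,\y)$ equals the left-hand side of \eqref{e5} for $\A$ minus $2\lambda x_i$; likewise for \eqref{e6}, the correction is $-2\lambda y_j$. Using the constraint \eqref{e7}, I can therefore rewrite the M-eigenvalue equations of $\A-\lambda\I$ at eigenvalue $\nu$ as the M-eigenvalue equations of $\A$ at eigenvalue $\nu+\lambda$. Setting $\nu=\mu-\lambda$ gives the equivalence. The case $\lambda\I-\A$ is treated identically, noting that the M-eigenvalue equations \eqref{e5}--\eqref{e6} are linear in $\A$, so negating $\A$ negates the eigenvalue.

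I do not anticipate any real obstacle: everything reduces to verifying that $\I$ indeed produces the factor $2x_i$ (resp. $2y_j$) on the sphere, which is why $\I$ plays the role of an M-identity tensor; the rest is bookkeeping using linearity in $\A$. The only small point to be careful about is the factor of $2$ in \eqref{e5}--\eqref{e6}, which comes from the two symmetric partial-derivative terms; this factor is precisely matched by the two copies of $\I$ contributing $x_i$ each, so the equivalence is clean with $\mu-\lambda$ (and not, e.g., $\mu-2\lambda$).
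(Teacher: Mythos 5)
Your proof is correct and takes essentially the same route the paper had in mind: the paper's own proof is a one-line appeal to "the definitions of M-eigenvalues and $\I$," and your argument is exactly the expansion of that remark, verifying that $\I$ contributes $2x_i$ (resp.\ $2y_j$) to the left-hand sides of \eqref{e5} (resp.\ \eqref{e6}) on the constraint spheres, and then using linearity in the tensor argument to shift the M-eigenvalue by $\lambda$.
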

	\begin{proof}
		{By} the {definitions} of M-eigenvalues and $\I$, we have the conclusion.
	\end{proof}
	
	Suppose that $\A = \left(a_{i_1j_1i_2j_2}\right) \in BQ(m, n)$.  Then we call $a_{ijij}$ diagonal entries of $\A$ for $i {\in [m]}$ and $j {\in [n]}$.  The other entries of $\A$ are called off-diagonal entries of $\A$.  If all the off-diagonal entries are zeros, then $\A$ is called a diagonal biquadratic tensor.   If $\A$ is diagonal, then $\A$ has $mn$  M-eigenvalues, which are its diagonal elements,
	with corresponding  vectors  
	{$\vx = \ve_i(m)$ and $\vy =\ve_j(n)$,}
	where $\ve_i(m)$ is the $i$th unit vector in ${\Re}^m$, and $\ve_j(n)$ is the $j$th unit vector in ${\Re}^n$,  as their M-eigenvectors.   However, different from cubic tensors, in this case, $\A$ may have some other M-eigenvalues and M-eigenvectors.  In fact, for a diagonal biquadratic tensor $\A$, (\ref{e5}) and (\ref{e6}) have the form:
	for $i {\in [m]}$,
	\begin{equation} \label{e5.1}
		\sum_{j=1}^n a_{ijij}y_j^2x_i = \lambda x_i;
	\end{equation}
	and for $j {\in [n]}$,
	\begin{equation} \label{e6.1}
		\sum_{i=1}^m a_{ijij}x_i^2y_j = \lambda y_j.
	\end{equation}

	We now present an example with $m=n=2$.
	
	\begin{example}-- A diagonal biquadratic tensor $\A$  may possess M-eigenvalues that are distinct from its diagonal elements.
		Suppose that $a_{1111}=1$, $a_{1212}=\alpha$, $a_{2121}=\beta$, and $a_{2222} = \gamma$. Then  by equations \eqref{e5.1} and \eqref{e6.1}, we have
		\begin{eqnarray*}
			\left\{
			\begin{array}{c}
				x_1\left( y_1^2+\alpha y_2^2-\lambda\right)=0,\\
				x_2\left( \beta y_1^2+\gamma y_2^2-\lambda\right)=0,\\
				y_1\left( x_1^2+\beta x_2^2-\lambda\right)=0,\\
				y_2\left(\alpha x_1^2+\gamma x_2^2-\lambda\right)=0.\\
			\end{array}
			\right.
		\end{eqnarray*}
		Let $x_1^2+x_2^2=1$ and $y_1^2+y_2^2=1$.  Suppose that $x_1,x_2,y_1,y_2$ are  nonzero numbers. Otherwise, if {any of these numbers are} zero, the eigenvalue shall be equal to one of the diagonal elements.  Then  we have an M-eigenvalue $\lambda = \frac{\gamma-\alpha \beta }{\gamma+1-\alpha-\beta}$ and the  corresponding M-eigenvectors satisfy
		\begin{equation*}
			x_1^2= \frac{\gamma- \beta }{\gamma+1-\alpha-\beta},  x_2^2 = \frac{1-\alpha}{\gamma+1-\alpha-\beta},
		\end{equation*}
		and
		\begin{equation*}
			y_1^2= \frac{\gamma- \alpha }{\gamma+1-\alpha-\beta},  y_2^2 = \frac{1-\beta}{\gamma+1-\alpha-\beta}.
		\end{equation*}
		For instance, if $\alpha=\beta=-1$ and $\gamma = 1$, we have $\lambda = 0$, which is not a diagonal element of $\A$, and the corresponding M-eigenvectors satisfy  $x_1^2=x_2^2=y_1^2=y_2^2=\frac12$.
	\end{example}

	Let $\A = \left(a_{i_1j_1i_2j_2}\right) \in BQ(m, n)$.   Then its diagonal entries form an $m \times n$ rectangular matrix $D = \left(a_{ijij}\right){\in\Re^{m\times n}}$.
	
	By (\ref{e5.1}) and (\ref{e6.1}), we have the following proposition.
	
	\begin{Prop}
		Suppose that $\A = \left(a_{i_1j_1i_2j_2}\right) \in BQ(m, n)$ is a diagonal biquadratic tensor, then $\A$ has $mn$  M-eigenvalues, which are its diagonal elements {$a_{ijij}$},
		with corresponding  vectors
		{{$\vx=\ve_i(m)$ and $\vy=\ve_j(n)$}}
		as their M-eigenvectors.   Furthermore, though $\A$ may have some other M-eigenvalues, they are still in the convex hull of {some} diagonal entries.
	\end{Prop}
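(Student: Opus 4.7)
The proof naturally splits into two pieces that come directly from the reduced eigenvalue equations (\ref{e5.1}) and (\ref{e6.1}), so my plan is to handle them in order.

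For the first part, I would simply substitute $\vx = \ve_i(m)$ and $\vy = \ve_j(n)$ into (\ref{e5.1}) and (\ref{e6.1}) and verify that they are satisfied with $\lambda = a_{ijij}$. For the $i$th coordinate of (\ref{e5.1}), only the $j$th term of the inner sum survives (since $y_l = 0$ for $l \neq j$), giving $a_{ijij} \cdot 1 \cdot 1 = \lambda \cdot 1$, which fixes $\lambda = a_{ijij}$. For any coordinate $i' \neq i$, the left side vanishes because $x_{i'} = 0$, so the equation holds trivially. The symmetric calculation handles (\ref{e6.1}). Since the choice of $(i,j)$ ranges over $[m] \times [n]$, this produces $mn$ M-eigenvalue/M-eigenvector triples of the stated form.

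For the second part, I would look at an arbitrary M-eigenvalue $\lambda$ with M-eigenvectors $\vx, \vy$, and introduce the supports $I = \{i \in [m] : x_i \neq 0\}$ and $J = \{j \in [n] : y_j \neq 0\}$. Dividing (\ref{e5.1}) by $x_i$ for $i \in I$ yields
\begin{equation*}
\lambda = \sum_{j \in J} a_{ijij}\, y_j^2,
\end{equation*}
and similarly from (\ref{e6.1}) for $j \in J$,
\begin{equation*}
\lambda = \sum_{i \in I} a_{ijij}\, x_i^2.
\end{equation*}
The normalization (\ref{e7}) forces $\sum_{j \in J} y_j^2 = 1$ with $y_j^2 > 0$ for $j \in J$, so the first identity exhibits $\lambda$ as a convex combination of the diagonal entries $\{a_{ijij} : j \in J\}$ for any fixed $i \in I$; the second does the same along a column. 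Either identity places $\lambda$ in the convex hull of a subset of diagonal entries, which is the claim.

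I do not anticipate a real obstacle here: everything flows mechanically from writing the system in support form and recognizing $y_j^2$ and $x_i^2$ as convex weights. The only point one must be slightly careful about is distinguishing the trivial equations (holding because $x_i = 0$ or $y_j = 0$) from the active ones that provide the convex-combination identity; limiting attention to the supports $I$ and $J$ takes care of this cleanly.
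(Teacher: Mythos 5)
Your proof is correct and follows essentially the same route as the paper: verifying $\lambda = a_{ijij}$ against the reduced system for the unit vectors, then restricting (\ref{e5.1})--(\ref{e6.1}) to the supports of $\x$ and $\y$ and reading off $\lambda$ as a convex combination with weights $y_j^2$ (or $x_i^2$). If anything, your second half states the convex-hull conclusion more directly than the paper's matrix reformulation, but the underlying idea is identical.
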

	\begin{proof}
		When  {$\vx=\ve_i(m)$ and $\vy=\ve_j(n)$}, we may verify that (\ref{e5.1}) and (\ref{e6.1}) hold with $\lambda=a_{ijij}$. This shows the first conclusion.

		Furthermore, let $I_x=\{i: x_i\neq 0\}$, $I_y=\{j: y_j\neq 0\}$, and $D_{I}  \in \Re^{|I_x|\times |I_y|}$.
		{Here, $D_{I}$ is composed of the rows $I_x$ and columns $I_y$ of $D =\left(a_{ijij}\right){\in\Re^{m\times n}}$.}
		Then equations  (\ref{e5.1}) and (\ref{e6.1}) can be reformulated as
		\begin{equation*}
			D_I \vy_{I_y}^{[2]}=\lambda {\bf 1}_{|I_x|},  D_I^\top \vx_{I_x}^{[2]}=\lambda {\bf 1}_{|I_y|}.
		\end{equation*}
		Here,  $\vx_{I_x}^{[2]} = \{(x_i^2)\}_{i\in I_x}$ and $\vy_{I_y}^{[2]} = \{(y_j^2)\}_{j\in I_y}$.
		Therefore, given the index sets $I_x$ and $I_y$, as long as 
		{${\bf 1_{|I_x|}}$ and ${\bf 1_{|I_y|}}$  are}
		in the convex hull of $D_I$ and $D_I^\top$, {respectively,} then there is an M-eigenvalue in the convex hull of the 
		{entries} in $D_I$.
		
		This completes the proof.
	\end{proof}

	We have the following theorem for biquadratic tensors.  	
	
	\begin{Thm}
		A necessary condition for a biquadratic tensor $\A \in BQ(m, n)$ to be positive semi-definite is that all of its diagonal entries are nonnegative.   A necessary condition for $\A$ to be positive definite is that all of its diagonal entries are positive.   If $\A$ is a diagonal biquadratic tensor, then these conditions are sufficient.
	\end{Thm}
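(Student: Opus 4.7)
The plan is to handle the necessary and sufficient directions separately, and in each case reduce everything to evaluating the form $f(\vx,\vy)$ on convenient vectors.

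For the necessary conditions, I would test $f$ at the canonical unit vectors. Fix $i\in[m]$ and $j\in[n]$, and set $\vx = \ve_i(m)$, $\vy = \ve_j(n)$. Then $\vx^\top\vx = \vy^\top\vy = 1$, and every term $a_{i_1j_1i_2j_2}x_{i_1}y_{j_1}x_{i_2}y_{j_2}$ vanishes unless $i_1=i_2=i$ and $j_1=j_2=j$, so $f(\vx,\vy) = a_{ijij}$. Positive semi-definiteness therefore forces $a_{ijij}\ge 0$ and positive definiteness forces $a_{ijij}>0$, for every pair $(i,j)$. This establishes both necessary conditions in one stroke.

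For sufficiency in the diagonal case, I would substitute into the simplified expression noted in the excerpt: when $\A$ is diagonal,
\begin{equation*}
f(\vx,\vy) = \sum_{i=1}^m\sum_{j=1}^n a_{ijij}\, x_i^2\, y_j^2.
\end{equation*}
Since $x_i^2 y_j^2 \ge 0$, nonnegativity of every $a_{ijij}$ immediately gives $f(\vx,\vy)\ge 0$ for all $\vx,\vy$, proving positive semi-definiteness. For positive definiteness, assume every $a_{ijij}>0$ and set $a_{\min} = \min_{i,j} a_{ijij} > 0$. Then for any $\vx,\vy$ with $\vx^\top\vx = \vy^\top\vy = 1$,
\begin{equation*}
f(\vx,\vy) \;\ge\; a_{\min}\sum_{i,j} x_i^2 y_j^2 \;=\; a_{\min}\Bigl(\sum_i x_i^2\Bigr)\Bigl(\sum_j y_j^2\Bigr) \;=\; a_{\min} \;>\; 0,
\end{equation*}
which gives positive definiteness.

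There is essentially no hard step: the necessity argument is a one-line plug-in, and the sufficiency argument in the diagonal case reduces $f$ to a manifestly nonnegative sum of squared monomials. The only point requiring a small amount of care is the positive definite direction, where one must produce a strictly positive lower bound uniformly over the compact constraint set $\vx^\top\vx=\vy^\top\vy=1$; bounding by the minimum diagonal entry and factoring the double sum as shown does this cleanly.
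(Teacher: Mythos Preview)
Your proof is correct and follows essentially the same approach as the paper: evaluate $f(\ve_i(m),\ve_j(n))=a_{ijij}$ for the necessary conditions, and use the explicit form $f(\vx,\vy)=\sum_{i,j}a_{ijij}x_i^2y_j^2$ in the diagonal case for sufficiency. Your $a_{\min}$ argument for positive definiteness simply fills in a detail the paper leaves implicit.
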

	\begin{proof}
		Let $\A = (a_{i_1j_1i_2j_2})$ and
		$${f(\x, \y) =} \langle \A, \x \circ \y \circ \x \circ \y \rangle \equiv \sum_{i_1, i_2 =1}^m \sum_{j_1, j_2 = 1}^n a_{i_1j_1i_2j_2}x_{i_1}y_{j_1}x_{i_2}y_{j_2}.$$
		Then $f(\ve_i(m), \ve_j(n)) = {a_{i jij}}$
		for $i {\in [m]}$ and $j {\in [n]}$.
		This leads to the first two conclusions.   If $\A$ is a diagonal biquadratic tensor, then
		$$f(\x, \y) = \sum_{i=1}^m \sum_{j=1}^n a_{ijij}x_i^2y_j^2.$$
		The last conclusion follows.
	\end{proof}

	
	

	\section{Gershgorin-Type Theorem and Diagonally Dominated Biquadratic Tensors}

	Recall that for square matrices, there is a Gershgorin theorem, 
	{from which we may show that (strictly) diagonally dominated  matrices are  {positive semi-definite (definite)}.}
	These have been successfully generalized to cubic tensors \cite{Qi05, QL17}.    Then, can these be generalized to {nonsymmetric} biquadratic tensors?   To answer this question, we have to understand the ``rows'' and ``columns'' of a biquadratic tensor.
	
	Suppose that $\A = \left(a_{i_1j_1i_2j_2}\right) \in BQ(m, n)$.  Then $\A$ has $m$ rows.   At the $i$th row of $\A$, there are $n$ diagonal entries $a_{ijij}$ for $j  {\in [n]}$, and totally $2mn^2$ entries  {$a_{i_1j_1ij_2}$ and} $a_{ij_1i_2j_2}$ for $i_1, i_2  {\in [m]}$ and $j_1, j_2 {\in [n]}$.     {We use}  the notation $\bar a_{ij_1i_2j_2}$.   Let
	$\bar a_{i_1j_1i_2j_2} = a_{i_1j_1i_2j_2}$ if $a_{i_1j_1i_2j_2}$ is an off-diagonal entry, and $\bar a_{i_1j_1i_2j_2} = 0$ if $a_{i_1j_1i_2j_2}$ is a diagonal entry.		Then,  {in the $i$th row, the}   diagonal entry {$a_{ijij}$}   has the responsibility to dominate the
	{$4mn$} entries {$\bar a_{i_1jij_2}$, $\bar a_{i_1j_1ij}$,} $\bar a_{iji_2j_2}$  and $\bar a_{ij_1i_2j}$ for $i_1, i_2 \in [m]$ and $j_1, j_2 \in [n]$.  
	Therefore, for $i {\in [m]}$ and $j {\in [n]}$, we define
	\begin{equation}\label{equ:rij}
		r_{ij}= \frac14\sum_{i_1=1}^m \left(\sum_{j_2=1}^n \left|\bar a_{i_1jij_2}\right| + \sum_{j_1=1}^n \left|\bar a_{i_1j_1ij}\right|\right) + \frac14\sum_{i_2=1}^m \left(\sum_{j_2=1}^n \left|\bar a_{iji_2j_2}\right| + \sum_{j_1=1}^n \left|\bar a_{ij_1i_2j}\right|\right).
	\end{equation}
	
	If $\A$ is weakly symmetric, then \eqref{equ:rij} reduces to
	\[r_{ij} = \frac12\sum_{i_2=1}^m \left(\sum_{j_2=1}^n \left|\bar a_{iji_2j_2}\right| + \sum_{j_1=1}^n \left|\bar a_{ij_1i_2j}\right|\right).\]
	If $\A$ is  symmetric, then \eqref{equ:rij} reduces to
	\[r_{ij} = \sum_{i_2=1}^m  \sum_{j_2=1}^n \left|\bar a_{iji_2j_2}\right|.\]



	For M-eigenvalues, here we have the following theorem,  which generalizes the Gershgorin theorem  of matrices  to biquadratic tensors.
	
	\begin{Thm} \label{Ger}
		Suppose that $\A = \left(a_{i_1j_1i_2j_2}\right) \in BQ(m, n)$. Let {$r_{ij}$   for $i {\in [m]}$ and $j {\in [n]}$, be defined  by equation  \eqref{equ:rij}.}
		Then any M-eigenvalue $\lambda$ of $\A$ lies in one of {the} following $m$ intervals:
		\begin{equation} \label{Ger1}
			\left[\min_{j=1,\dots,n} \left\{a_{ijij} -{ r_{ij}}\right\},  \max_{j=1,\dots,n}\left\{a_{ijij} + {r_{ij}}\right\} \right]
		\end{equation}
		for $i {\in [m]}$, and one of the following $n$ intervals:
		\begin{equation} \label{Ger2}
			\left[\min_{i=1,\dots,m} \left\{a_{ijij} - {r_{ij}}\right\}, \max_{i=1,\dots,m} \left\{a_{ijij} + {r_{ij}}\right\}\right],
		\end{equation}
		for $j {\in [n]}$.
	\end{Thm}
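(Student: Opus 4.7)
The plan is to mimic the classical Gershgorin matrix proof: isolate the diagonal contribution in the M-eigenvalue equations and control the remainder via the row sums $r_{ij}$. I will prove \eqref{Ger1} in detail; the dual family \eqref{Ger2} will follow by a mirror argument using \eqref{e6} instead of \eqref{e5}.

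First I will pick $i^\ast\in[m]$ with $|x_{i^\ast}|=\max_i|x_i|$ (such an $i^\ast$ has $|x_{i^\ast}|>0$ because $\x^\top\x=1$) and decompose \eqref{e5} at $i=i^\ast$ into its diagonal and off-diagonal parts. In each of the two summations, the diagonal entries $a_{ijij}$ can only be hit when the appropriate indices coincide (namely $i_1=i^\ast,\,j_1=j_2$ in the first sum, and $i_2=i^\ast,\,j_1=j_2$ in the second); a short direct computation using $\y^\top\y=1$ then collapses both contributions into $x_{i^\ast}\sum_{j}a_{i^\ast ji^\ast j}y_j^2$ each, so that \eqref{e5} rearranges to
\begin{equation*}
2x_{i^\ast}\Bigl(\lambda-\sum_{j=1}^n a_{i^\ast ji^\ast j}y_j^2\Bigr)=\sum_{i_1,j_1,j_2}\bar a_{i_1j_1i^\ast j_2}x_{i_1}y_{j_1}y_{j_2}+\sum_{i_2,j_1,j_2}\bar a_{i^\ast j_1 i_2 j_2}y_{j_1}x_{i_2}y_{j_2}.
\end{equation*}

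The key estimate will bound the right-hand side by $2|x_{i^\ast}|\sum_j y_j^2\,r_{i^\ast j}$. I will first pull out $|x_{i^\ast}|$ using $|x_{i_1}|,|x_{i_2}|\le|x_{i^\ast}|$, then apply the elementary inequality $|y_{j_1}y_{j_2}|\le\tfrac12(y_{j_1}^2+y_{j_2}^2)$ to each remaining triple sum. Splitting by the two quadratic factors yields four weighted double sums of the form $\sum_j y_j^2\,T_k(i^\ast,j)$, where the four $T_k$'s are precisely $\sum_{i_1,j_2}|\bar a_{i_1 j i^\ast j_2}|$, $\sum_{i_1,j_1}|\bar a_{i_1 j_1 i^\ast j}|$, $\sum_{i_2,j_2}|\bar a_{i^\ast j i_2 j_2}|$, and $\sum_{i_2,j_1}|\bar a_{i^\ast j_1 i_2 j}|$, i.e.\ the four double sums whose quarter-sum defines $r_{i^\ast j}$ in \eqref{equ:rij}. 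Dividing by $2|x_{i^\ast}|>0$ then delivers $|\lambda-\sum_j a_{i^\ast ji^\ast j}y_j^2|\le\sum_j y_j^2\,r_{i^\ast j}$. Since the weights $y_j^2$ are nonnegative and sum to $1$, both $\sum_j y_j^2(a_{i^\ast ji^\ast j}\pm r_{i^\ast j})$ are convex combinations of the numbers $a_{i^\ast ji^\ast j}\pm r_{i^\ast j}$, so $\lambda$ is pinned between $\min_j\{a_{i^\ast ji^\ast j}-r_{i^\ast j}\}$ and $\max_j\{a_{i^\ast ji^\ast j}+r_{i^\ast j}\}$, which is the $i^\ast$-th interval of \eqref{Ger1}. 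For \eqref{Ger2} I will choose $j^\ast$ maximizing $|y_j|$, apply the same diagonal extraction to \eqref{e6}, and use $|x_{i_1}x_{i_2}|\le\tfrac12(x_{i_1}^2+x_{i_2}^2)$ to land on the symmetric inequality $|\lambda-\sum_i a_{ij^\ast ij^\ast}x_i^2|\le\sum_i x_i^2\,r_{ij^\ast}$.

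The main obstacle is the combinatorial accounting in the AM-GM collapse step: the four-fold structure of the eigenvalue equation (two summands in \eqref{e5}, each producing two double sums after AM-GM) is exactly what mirrors the four terms and the normalizing factor $\tfrac14$ in the definition \eqref{equ:rij} of $r_{ij}$, and one must verify index by index that each triple-sum bound lands in the correct $T_k$ (with the correct slot-fixing pattern) before the clean inequality $|\lambda-\sum_j a_{i^\ast ji^\ast j}y_j^2|\le\sum_j y_j^2\,r_{i^\ast j}$ can emerge.
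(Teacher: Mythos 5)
Your proposal matches the paper's proof essentially step for step: select the index maximizing $|x_i|$, isolate the diagonal contribution $\sum_j a_{ijij}y_j^2$ from both summands of \eqref{e5}, bound each off-diagonal triple sum by $|x_i|$ times an AM--GM estimate on $|y_{j_1}y_{j_2}|$, recollect the four double sums into $\sum_j y_j^2\,r_{ij}$, and finish by the convex-combination observation using $\y^\top\y=1$. The only cosmetic difference is that you package the conclusion as a two-sided absolute-value bound $|\lambda-\sum_j a_{ijij}y_j^2|\le\sum_j y_j^2 r_{ij}$, whereas the paper writes out the upper bound explicitly and notes the lower bound (and the dual family from \eqref{e6}) follow analogously.
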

	\begin{proof}
		Suppose that $\lambda$ is an M-eigenvalue of $\A$, with M-eigenvectors $\vx$ and $\vy$.   Assume that {$x_i\neq 0$} is the component of $\vx$ with the largest absolute value.  From (\ref{e5}), we have
		\begin{eqnarray*}
			\lambda & =  & \sum_{j=1}^n {a_{ijij}y_j^2} + {{1\over 2} \sum_{i_1=1}^m \sum_{j_1, j_2=1}^n  \bar a_{i_1j_1ij_2}{x_{i_1} \over x_i}y_{j_1}y_{j_2} +  {1 \over 2}}\sum_{i_2=1}^m \sum_{j_1, j_2=1}^n  \bar a_{ij_1i_2j_2}y_{j_1} {x_{i_2} \over x_i}y_{j_2} \\
			& \le &  \sum_{j=1}^n{a_{ijij}y_j^2}+ {{1\over 2} \sum_{i_1=1}^m \sum_{j_1, j_2=1}^n \left| \bar a_{i_1j_1ij_2}\right| \left|y_{j_1}y_{j_2}\right| + {1 \over 2}}\sum_{i_2=1}^m \sum_{j_1, j_2=1}^n  \left|\bar a_{ij_1i_2j_2}\right| \left|y_{j_1}y_{j_2}\right| \\
			& \le &  \sum_{j=1}^n {a_{ijij}y_j^2} + {{1\over 4} \sum_{i_1=1}^m \sum_{j_1, j_2=1}^n \left| \bar a_{i_1j_1ij_2}\right| \left(y_{j_1}^2+y_{j_2}^2\right) + {1 \over 4}}\sum_{i_2=1}^m \sum_{j_1, j_2=1}^n  \left|\bar a_{ij_1i_2j_2}\right| \left(y_{j_1}^2 + y_{j_2}^2 \right) \\
			&=&   \sum_{j=1}^n y_j^2 \left[a_{ijij} + \frac14\sum_{i_1=1}^m \left(\sum_{j_2=1}^n \left|\bar a_{i_1jij_2}\right| + \sum_{j_1=1}^n \left|\bar a_{i_1j_1ij}\right|\right)\right.\\
			&& + \frac14\left.\sum_{i_2=1}^m \left(\sum_{j_2=1}^n \left|\bar a_{iji_2j_2}\right| + \sum_{j_1=1}^n \left|\bar a_{ij_1i_2j}\right|\right)\right] \\
			& \le & \max_{j=1,\cdots, n} \left[a_{ijij} + r_{ij}\right],
		\end{eqnarray*}	
		for $i{\in [m]}$.  The other inequalities of (\ref{Ger1}) and (\ref{Ger2}) can be derived similarly.
		
		{This completes the proof.}
	\end{proof}
	
	In the symmetric case, the inclusion intervals and bounds of M-eigenvalues have been presented in \cite{LLL19,LCLL22,WWL20, Zh23}.   Here, Theorem \ref{Ger} covers the nonsymmetric case too.

	If the entries of $\A$ satisfy
	\begin{equation}\label{dom1}
		a_{ijij}\ge r_{ij}\equiv  {\frac14\sum_{i_1=1}^m \left(\sum_{j_2=1}^n \left|\bar a_{i_1jij_2}\right| + \sum_{j_1=1}^n \left|\bar a_{i_1j_1ij}\right|\right) + \frac14}\sum_{i_2=1}^m \left(\sum_{j_2=1}^n \left|\bar a_{iji_2j_2}\right| + \sum_{j_1=1}^n \left|\bar a_{ij_1i_2j}\right|\right),
	\end{equation}
	for all $i {\in [m]}$ and $j{\in [n]}$,
	then $\A$ is called a diagonally dominated biquadratic tensor.  If {the} strict inequality holds for all these {$mn$} inequalities, then $\A$ is called a strictly diagonally dominated biquadratic tensor.

	Suppose that $\A = \left(a_{i_1j_1i_2j_2}\right) \in SBQ(m, n)$. Then equation \eqref{dom1}   can be {simplified} as
	\begin{equation}\label{dom_sym}
		a_{ijij} \ge \sum_{i_2=1}^m \sum_{j_2=1}^n \left|\bar a_{iji_2j_2}\right|,
	\end{equation}
	for all $i{\in [m]}$ and $j{\in [n]}$.
	
	\begin{Cor}\label{cor_dominated_PD}
		A diagonally dominated biquadratic  tensor is positive semi-definite.   A strictly diagonally  dominated  biquadratic  tensor is positive definite.
	\end{Cor}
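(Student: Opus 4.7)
The plan is to deduce the corollary by combining the Gershgorin-type Theorem \ref{Ger} with the M-eigenvalue characterization of positive (semi-)definiteness given by Theorem \ref{T3.1}. Since the diagonal dominance condition \eqref{dom1} is precisely $a_{ijij} \ge r_{ij}$ for all $i\in[m]$ and $j\in[n]$, it follows directly that $a_{ijij} - r_{ij} \ge 0$ for every pair $(i,j)$, and hence
\[
\min_{j\in[n]}\{a_{ijij} - r_{ij}\} \ge 0 \quad\text{for every } i\in[m], \qquad \min_{i\in[m]}\{a_{ijij} - r_{ij}\} \ge 0 \quad\text{for every } j\in[n].
\]
Therefore, the left endpoint of each of the $m+n$ Gershgorin intervals appearing in \eqref{Ger1} and \eqref{Ger2} is nonnegative.

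First I would invoke Theorem \ref{T3.1} to guarantee that every $\A\in BQ(m,n)$ has at least one M-eigenvalue and that positive semi-definiteness (resp. definiteness) is equivalent to all M-eigenvalues being nonnegative (resp. positive). Then I would pick an arbitrary M-eigenvalue $\lambda$ and apply Theorem \ref{Ger}, which places $\lambda$ inside one of the intervals in \eqref{Ger1} or \eqref{Ger2}. The computation in the previous paragraph shows the lower endpoint of any such interval is $\ge 0$, so $\lambda \ge 0$. Since $\lambda$ was arbitrary, all M-eigenvalues of $\A$ are nonnegative, and the semi-definite half of the corollary follows. The strictly dominated case is identical with every ``$\ge$'' upgraded to ``$>$'', yielding $\lambda > 0$ and hence positive definiteness.

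There is no substantive obstacle here: the argument is essentially a one-line consequence of the two theorems already proved, and the only thing to be careful about is that the Gershgorin bound produces a two-sided interval rather than just an upper bound, so the relevant fact to extract from diagonal dominance is the nonnegativity of the lower endpoint $a_{ijij} - r_{ij}$, not just the usual ``row sum'' comparison. I would write the proof as a short paragraph making this observation explicit, with a single sentence handling the strict case in parallel.
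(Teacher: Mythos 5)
Your proposal matches the paper's proof exactly: the paper also cites Theorems~\ref{Ger} and \ref{T3.1} and leaves the (one-line) details implicit, while you spell out that diagonal dominance makes the lower endpoints of the Gershgorin intervals nonnegative (positive in the strict case), forcing all M-eigenvalues to be $\ge 0$ (resp. $>0$). Correct and the same route.
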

	\begin{proof}
		This result follows directly from Theorems~\ref{Ger} and \ref{T3.1}.
	\end{proof}

	{

		\section{B-Biquadratic Tensors and M-Biquadratic Tensors}
		
		Several structured tensors, including B$_0$, B-tensors, Z-tensors, M-tensors and strong M-tensors have been studied.   See \cite{QL17} for this.
		It has been established that even order
		symmetric B$_0${-tensors} and even order symmetric M-tensors are positive semi-definite,  even order symmetric B{-tensors} and even order symmetric strong M-tensors are positive definite {\cite{QS14, QL17}.}
		We now extend such  structured cubic tensors and the above properties
		to biquadratic tensors.
		
		Let $\A = \left(a_{i_1j_1i_2j_2}\right) \in BQ(m, n)$.  Suppose that
		for $i {\in [m]}$ and $j {\in [n]}$, we have
		\begin{equation}\label{B1}
			\sum_{i_1=1}^m \left(\sum_{j_2=1}^n a_{i_1jij_2} + \sum_{j_1=1}^n  a_{i_1j_1ij}\right) +\sum_{i_2=1}^m \left(\sum_{j_2=1}^n  a_{iji_2j_2} + \sum_{j_1=1}^n  a_{ij_1i_2j}\right) \ge 0,
		\end{equation}
		and for $i, i_2 {\in [m]}$ and $j, j_1, j_2 {\in [n]}$, we have
		\begin{eqnarray}
			\nonumber	&& {1 \over 4mn}\left[ {\sum_{i_1=1}^m \left(\sum_{j_2=1}^n a_{i_1jij_2} + \sum_{j_1=1}^n  a_{i_1j_1ij}\right) + }\sum_{i_2=1}^m \left(\sum_{j_2=1}^n  a_{iji_2j_2} + \sum_{j_1=1}^n  a_{ij_1i_2j}\right)  \right]\\
			&\ge &\max \left\{ \bar a_{iji_2j_2}, \bar a_{i_1jij_2}, \bar a_{ij_1i_2j}, \bar a_{i_1j_1ij} \right\}.\label{B2}
		\end{eqnarray}
		Then we say that $\A$ is a B$_0${-}biquadratic tensor.  If {all} strict inequalities hold in (\ref{B1}) and (\ref{B2}), then we say that $\A$ is a B{-}biquadratic tensor.

		A biquadratic tensor $\A$ in $BQ(m,n)$ is called a Z-biquadratic tensor if all of its off-diagonal entries are nonpositive.
		If $\A$ is a Z-biquadratic tensor, then it can be written as $\A=\alpha \I-\B$, where $\I$ is the identity tensor in $BQ(m,n)$, and $\B$ is a nonnegative biquadratic tensor.   
		By the discussion in the last section, $\B$ has an M-eigenvalue.
		Denote ${\lambda_{\max}}(\B)$ as the largest M eigenvalue of $\B$. 
		If $\alpha \ge {\lambda_{\max}}(\B)$, then $\A$ is called an M-biquadratic tensor.   If $\alpha > {\lambda_{\max}}(\B)$, then $\A$ is called a strong M-biquadratic tensor.
		
		If $\B \in SBQ(3, 3)$ and is nonnegative, {it follows from} Theorem 6 of \cite{DLQY20} {that} $\lambda_{\max}(\B) = \rho_M(\B)$, where $\rho_M(\B)$ is the M-spectral radius of $\B$, i.e., the largest absolute value of the M-eigenvalues of $\B$.   Checking the proof of Theorem 6 of \cite{DLQY20}, we may find that this is true for $\B \in SBQ(m, n)$, $m, n \ge 2$ and $B$ being nonnegative.   If $\B$ is nonnegative but not symmetric, this conclusion is still an open problem at this moment.
		
		The following proposition is a direct generalization of Proposition 5.37 of \cite{QL17} for cubic tensors to biquadratic tensors.
		
		\begin{Prop}\label{prop_Z-B-tensor}
			Suppose $\A\in BQ(m,n)$ is a Z-biquadratic tensor. Then
			\begin{itemize}
				\item[(i)] $\A$ is diagonally dominated if and only if $\A$ is a B$_0$-biquadratic tensor;
				\item[(ii)] $\A$ is strictly diagonally dominated if and only if $\A$ is a B-biquadrtic tensor.
			\end{itemize}
		\end{Prop}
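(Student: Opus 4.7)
The plan is to reduce both equivalences to a single algebraic identity that holds under the Z-property. The key observation is that, because $\A$ is Z-biquadratic, all off-diagonal entries are nonpositive, so $|\bar a_{i_1j_1i_2j_2}| = -\bar a_{i_1j_1i_2j_2}$ wherever $(i_1,j_1) \neq (i_2,j_2)$. This lets us convert the absolute values appearing in $r_{ij}$ into actual (signed) sums of off-diagonal entries, which is precisely what appears in the B$_0$/B-condition (\ref{B1}).

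First I would denote by $S_{ij}$ the left-hand side of (\ref{B1}) and split each of the four quadruple sums into its single diagonal contribution (an $a_{ijij}$ term that arises exactly once in each of the four sums, corresponding to the index choices $i_1=i,j_2=j$; $i_1=i,j_1=j$; $i_2=i,j_2=j$; and $i_2=i,j_1=j$) and the remaining off-diagonal contributions expressed using $\bar a$. This yields
\begin{equation*}
S_{ij} = 4\,a_{ijij} + \sum_{i_1=1}^m\Bigl(\sum_{j_2=1}^n \bar a_{i_1jij_2} + \sum_{j_1=1}^n \bar a_{i_1j_1ij}\Bigr) + \sum_{i_2=1}^m\Bigl(\sum_{j_2=1}^n \bar a_{iji_2j_2} + \sum_{j_1=1}^n \bar a_{ij_1i_2j}\Bigr).
\end{equation*}
Applying the Z-property to replace each $\bar a$ by $-|\bar a|$ and comparing with the formula (\ref{equ:rij}) for $r_{ij}$, this collapses to the identity $S_{ij} = 4(a_{ijij} - r_{ij})$. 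Consequently (\ref{B1}) is equivalent to $a_{ijij} \ge r_{ij}$, and the strict version of (\ref{B1}) is equivalent to $a_{ijij} > r_{ij}$.

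Next I would dispatch condition (\ref{B2}). Since $\A$ is Z-biquadratic, every one of $\bar a_{iji_2j_2}, \bar a_{i_1jij_2}, \bar a_{ij_1i_2j}, \bar a_{i_1j_1ij}$ is nonpositive, so the right-hand side of (\ref{B2}) is $\le 0$. On the other hand, whenever diagonal dominance holds we have just shown $S_{ij} \ge 0$, so the left-hand side of (\ref{B2}), which is $S_{ij}/(4mn)$, is $\ge 0$; hence (\ref{B2}) holds automatically. In the strictly dominated case $S_{ij} > 0$, so the left-hand side is strictly positive while the right-hand side is still $\le 0$, giving the strict version of (\ref{B2}). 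Conversely, either version of the B-condition contains (\ref{B1}), which by the identity above immediately yields the corresponding form of diagonal dominance.

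I do not expect any serious obstacle; the entire argument is a bookkeeping exercise. The only spot that requires care is the index split in the first step, namely verifying that each of the four quadruple sums contains the diagonal entry $a_{ijij}$ exactly once, so that the diagonal contribution to $S_{ij}$ is precisely $4a_{ijij}$ and the remainder can be written uniformly in terms of $\bar a$. Once this is confirmed, both parts (i) and (ii) fall out simultaneously from the identity $S_{ij} = 4(a_{ijij} - r_{ij})$ combined with the sign information provided by the Z-property.
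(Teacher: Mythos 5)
Your proof is correct and follows essentially the same route as the paper: both establish the algebraic identity that the left-hand side of (\ref{B1}) equals $4(a_{ijij} - r_{ij})$ under the Z-property (with the diagonal entry $a_{ijij}$ counted exactly once in each of the four quadruple sums), so that (\ref{B1}) is equivalent to diagonal dominance, and then observe that (\ref{B2}) holds automatically because its right-hand side is nonpositive while its left-hand side inherits the sign of (\ref{B1}). The only difference is presentational — you state the index-splitting step more explicitly — but the underlying argument is identical.
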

		\begin{proof}
			By  the fact that all off-diagonal entries of $\A$ are nonpositive, for all $i\in[m]$ and $j\in [n]$, we have
			\begin{eqnarray}
				&&{1 \over 4}\left[{\sum_{i_1=1}^m \left(\sum_{j_2=1}^n a_{i_1jij_2} + \sum_{j_1=1}^n  a_{i_1j_1ij}\right) + }\sum_{i_2=1}^m \left(\sum_{j_2=1}^n  a_{iji_2j_2} + \sum_{j_1=1}^n  a_{ij_1i_2j}\right)\right]\\
				\nonumber	&=&	a_{ijij}-   {1 \over 4}\left[\sum_{i_1=1}^m \left(\sum_{j_2=1}^n {|\bar a_{i_1jij_2}} + \sum_{j_1=1}^n  {|\bar a_{i_1j_1ij}|}\right) + \sum_{i_2=1}^m \left(\sum_{j_2=1}^n  {|\bar a_{iji_2j_2}|} + \sum_{j_1=1}^n  {|\bar a_{ij_1i_2j}|}\right)\right].
			\end{eqnarray}
			Therefore,   \eqref{dom1} and \eqref{B1}  are equivalent.
			
			If $\A$ is  diagonally dominated, then  \eqref{dom1} holds true. Hence, \eqref{B1} also holds.  Since the left hand side of \eqref{B2} is nonnegative, and the right hand side is nonpositive, we have \eqref{B2}   holds.
			
			On the other hand, suppose that $\A$ is a B$_0$-biquadratic tensor. Then  \eqref{B1}   holds, thus \eqref{dom1} holds true.
			
			Similarly, we could show the last statement of this  {proposition}. This completes the proof.
		\end{proof}

		By Proposition \ref{prop:A-lmdI}, we have the following proposition.
		
		\begin{Prop}\label{prop6.1}
			An M-biquadratic tensor is positive semi-definite.
			A strong M-biquadratic tensor is positive definite.
		\end{Prop}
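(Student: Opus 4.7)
The plan is to reduce both assertions directly to Theorem \ref{T3.1} (the M-eigenvalue characterization of positive (semi-)definiteness) by composing it with the shift identity in Proposition \ref{prop:A-lmdI}. By definition, an M-biquadratic tensor admits a decomposition $\A = \alpha \I - \B$ with $\B$ a nonnegative biquadratic tensor and $\alpha \ge \lambda_{\max}(\B)$; in the strong case the inequality is strict.

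First I would apply Proposition \ref{prop:A-lmdI} to $\B$ with shift $\alpha$: the statement gives that $\alpha - \mu$ is an M-eigenvalue of $\alpha \I - \B = \A$ if and only if $\mu$ is an M-eigenvalue of $\B$. This sets up a bijection $\mu \mapsto \alpha - \mu$ between the M-spectra of $\B$ and $\A$. For any M-eigenvalue $\mu$ of $\B$, the definition of $\lambda_{\max}(\B)$ gives $\mu \le \lambda_{\max}(\B) \le \alpha$, hence $\alpha - \mu \ge 0$. Thus every M-eigenvalue of $\A$ is nonnegative, and Theorem \ref{T3.1} yields that $\A$ is positive semi-definite. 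In the strong M case, the bound $\alpha > \lambda_{\max}(\B)$ upgrades to $\alpha - \mu > 0$ uniformly over M-eigenvalues $\mu$ of $\B$, and Theorem \ref{T3.1} yields positive definiteness.

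The proof has essentially no substantive obstacle: once Proposition \ref{prop:A-lmdI} and Theorem \ref{T3.1} are in hand, the content reduces to a one-line shift argument. The only point worth checking carefully is that $\lambda_{\max}(\B)$ is genuinely attained as a maximum, so that the inequality $\alpha \ge \lambda_{\max}(\B)$ really does dominate every M-eigenvalue of $\B$; this follows from the same compactness-of-the-unit-spheres argument used in the proof of Theorem \ref{T3.1}, applied now to the maximization of $f(\x,\y,\x,\y)$ over $\x^\top \x = \y^\top \y = 1$.
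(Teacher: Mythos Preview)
Your proposal is correct and follows essentially the same route as the paper: both invoke Proposition~\ref{prop:A-lmdI} to write every M-eigenvalue of $\A=\alpha\I-\B$ as $\alpha-\mu$ for some M-eigenvalue $\mu$ of $\B$, use $\mu\le\lambda_{\max}(\B)\le\alpha$ (strict in the strong case) to conclude nonnegativity (positivity), and then apply Theorem~\ref{T3.1}. Your added remark on the attainment of $\lambda_{\max}(\B)$ is a reasonable point of care that the paper leaves implicit.
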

		\begin{proof}
			Suppose that $\A=\alpha \I-\B$, where $\alpha \ge{\lambda_{\max}}(\B)$,  $\I$ is the identity tensor in $BQ(m,n)$, and $\B$ is a nonnegative biquadratic tensor.  It follows from Proposition~\ref{prop:A-lmdI} that every M-eigenvalue of $\A$ satisfies $$\alpha-\mu\ge {\lambda_{\max}}(\B)-\mu\ge 0,$$
			where $\mu$ is an M-eigenvalue of $\B$.
			{This combined with Theorem~\ref{T3.1} shows the first statement of this proposition.}
			
			Similarly, we could show the last statement of this {proposition}. This completes the proof.
		\end{proof}
		
		\begin{Prop}\label{prop_M-B-tensor}
			Let $\A=(a_{i_1j_1i_2j_2}) \in BQ(m,n)$ be a Z-biquadratic tensor.  $\A$ is positive semi-definite if and only if it is an M-biquadratic tensor.  Similarly, $\A$ is positive definite if and only if it is a strong M-biquadratic tensor.
		\end{Prop}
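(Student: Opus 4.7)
The forward implications (M-biquadratic $\Rightarrow$ PSD, strong M-biquadratic $\Rightarrow$ PD) are already given by Proposition~\ref{prop6.1}, so the plan is to prove the two converses, and the two directions will be essentially identical up to replacing $\ge$ by $>$.

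For the nontrivial direction, suppose $\A$ is a Z-biquadratic tensor that is positive semi-definite. The plan is to construct an explicit M-biquadratic decomposition $\A = \alpha \I - \B$. First I would choose $\alpha$ large enough that $\B := \alpha \I - \A$ is a nonnegative biquadratic tensor. Concretely, pick any $\alpha \ge \max_{i \in [m], j \in [n]} a_{ijij}$: the off-diagonal entries of $\B$ equal $-a_{i_1j_1i_2j_2} \ge 0$ because $\A$ is a Z-biquadratic tensor, and the diagonal entries of $\B$ equal $\alpha - a_{ijij} \ge 0$ by the choice of $\alpha$. (Note that the diagonal entries of $\A$ are automatically nonnegative since $\A$ is PSD, but we do not need to use this.) Hence such an $\alpha$ exists, and $\B$ is nonnegative.

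The second step is to show that this $\alpha$ automatically dominates $\lambda_{\max}(\B)$, using the correspondence of M-eigenvalues. By Proposition~\ref{prop:A-lmdI}, $\mu$ is an M-eigenvalue of $\B = \alpha\I - \A$ if and only if $\alpha - \mu$ is an M-eigenvalue of $\A$. Because $\A$ is positive semi-definite, Theorem~\ref{T3.1} tells us that every M-eigenvalue of $\A$ is nonnegative, so $\alpha - \mu \ge 0$ for every M-eigenvalue $\mu$ of $\B$. Taking the maximum over all M-eigenvalues of $\B$ (which exist by Theorem~\ref{T3.1} applied to $\B$) yields $\alpha \ge \lambda_{\max}(\B)$. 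Therefore $\A = \alpha \I - \B$ is an M-biquadratic tensor, as required.

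For the positive-definite case the argument is identical: if $\A$ is positive definite then Theorem~\ref{T3.1} gives $\alpha - \mu > 0$ for every M-eigenvalue $\mu$ of $\B$, so $\alpha > \lambda_{\max}(\B)$ and $\A$ is a strong M-biquadratic tensor. The only real content is the interplay of three earlier results (the Z-structure to make $\B$ nonnegative, Proposition~\ref{prop:A-lmdI} to shift M-eigenvalues, and Theorem~\ref{T3.1} to read off signs), so there is no serious obstacle; the main point is simply to notice that the shift $\alpha$ needed to nonnegate the diagonal is automatically large enough to dominate $\lambda_{\max}(\B)$ once PSD-ness is assumed.
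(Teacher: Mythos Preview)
Your proposal is correct and follows essentially the same approach as the paper's proof: both cite Proposition~\ref{prop6.1} for one direction, then for the converse choose a shift so that $\B=\alpha\I-\A$ is nonnegative, invoke Proposition~\ref{prop:A-lmdI} to translate M-eigenvalues, and apply Theorem~\ref{T3.1} to conclude $\alpha\ge\lambda_{\max}(\B)$. The only cosmetic difference is that the paper takes $\gamma=\max\{\max_{i,j}a_{ijij},\lambda_M(\A)\}$ whereas you take any $\alpha\ge\max_{i,j}a_{ijij}$; as you implicitly observe, the extra term is unnecessary for the argument.
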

		\begin{proof}
			The necessity part follows from Proposition~\ref{prop6.1}. We will now demonstrate the sufficiency aspect.
			
			Let 
			$$\B=\gamma{\I} - \A, \text{ where }\gamma = \max\left\{\max_{ij}a_{ijij},{\lambda_M(\A)} \right\}.$$
			Then $\B$ is nonnegative. Furthermore, by Proposition~\ref{prop:A-lmdI},   we have $${\lambda_{\max}}(\B)=\max\{\gamma-\lambda: \lambda \text{ is an M-eigenvalue of } \A\}\le \gamma.$$
			Here, the last inequality follows from the fact that $\A$ is positive semidefinite and all the M-eigenvalues are nonnegative.
			This shows that $\A=\gamma I - \B$ is  an M-biquadratic tensor.
			
			Similarly, we could show the last statement of this {proposition}. This completes the proof.
		\end{proof}
		
		\begin{Cor}\label{Cor_M-B-tensor}
			Let $\A=(a_{i_1j_1i_2j_2}) \in BQ(m,n)$ be a Z-biquadratic tensor.  If all diagonal entries of $\A$ are nonnegative and there exist two positive diagonal matrices $D\in\Re^{m\times m}$ and $F\in\Re^{n\times n}$ such that $\C:=\A\times_1 D \times_2 F \times_3 D \times_4 F$ is diagonally dominated, then $\A$ is an M-biquadratic tensor.
			Similarly, if all diagonal entries of $\A$ are positive and there exist two positive diagonal matrices $D\in\Re^{m\times m}$ and $F\in\Re^{n\times n}$ such that $\C:=\A\times_1 D \times_2 F \times_3 D \times_4 F$ is strictly  diagonally dominated, then $\A$ is a strong M-biquadratic tensor.
		\end{Cor}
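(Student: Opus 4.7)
The plan is to reduce the M-biquadratic (resp.\ strong M-biquadratic) conclusion for $\A$ to the diagonal-dominance statement for $\C$ already proved in Corollary~\ref{cor_dominated_PD}, and then invoke Proposition~\ref{prop_M-B-tensor}. I would set up the proof in three short steps.

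First, I would observe that the multilinear scaling $\C = \A \times_1 D \times_2 F \times_3 D \times_4 F$ gives the entrywise relation $c_{i_1 j_1 i_2 j_2} = d_{i_1} f_{j_1} d_{i_2} f_{j_2}\, a_{i_1 j_1 i_2 j_2}$, so that the coefficient in front of each entry is strictly positive. Consequently, $\C$ inherits the sign pattern of $\A$: the off-diagonal entries of $\C$ are nonpositive, i.e.\ $\C$ is itself a Z-biquadratic tensor. (The hypothesis that the diagonal entries of $\A$ are nonnegative is in fact forced by the diagonal dominance of $\C$, since $d_i^2 f_j^2 a_{ijij} \ge r_{ij}(\C) \ge 0$.)

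Second, I would connect the quadratic forms of $\A$ and $\C$ via the substitution $\vu = D\vx$, $\vv = F\vy$. A direct expansion gives
\begin{equation*}
\langle \C, \vx \circ \vy \circ \vx \circ \vy \rangle = \langle \A, (D\vx) \circ (F\vy) \circ (D\vx) \circ (F\vy) \rangle = \langle \A, \vu \circ \vv \circ \vu \circ \vv \rangle .
\end{equation*}
Because $D$ and $F$ are positive diagonal, hence invertible, the correspondence $(\vx,\vy) \leftrightarrow (\vu,\vv)$ is a bijection on $\Re^m \times \Re^n$. Therefore $\C$ is positive semi-definite if and only if $\A$ is positive semi-definite, and similarly for positive definiteness (using $\vu \neq \mathbf{0}$, $\vv \neq \mathbf{0}$ iff $\vx \neq \mathbf{0}$, $\vy \neq \mathbf{0}$).

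Third, I would finish by combining the pieces: diagonal dominance of $\C$ yields, by Corollary~\ref{cor_dominated_PD}, that $\C$ is positive semi-definite; the bijection in step two then transfers this to $\A$; and since $\A$ is a Z-biquadratic tensor, Proposition~\ref{prop_M-B-tensor} upgrades positive semi-definiteness of $\A$ to its being an M-biquadratic tensor. The strong case is identical word for word, replacing ``diagonally dominated'' by ``strictly diagonally dominated,'' ``positive semi-definite'' by ``positive definite,'' and ``M-biquadratic'' by ``strong M-biquadratic.'' I do not foresee a genuine obstacle here; the only detail worth writing carefully is the scaling identity in step two, since every subsequent step is a direct appeal to an already proved result.
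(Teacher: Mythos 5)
Your proof is correct, and the overall skeleton matches the paper's: establish positive semi-definiteness (resp.\ definiteness) of $\C$ from its diagonal dominance via Corollary~\ref{cor_dominated_PD}, transfer this to $\A$, then invoke Proposition~\ref{prop_M-B-tensor} to upgrade to the M-biquadratic (resp.\ strong M-biquadratic) conclusion. Where you genuinely diverge is the transfer step. The paper cites Theorem~7 of \cite{JYZ17} as a black box to relate the M-eigenvalues of $\C$ and $\A$ under the multilinear scaling, whereas you prove the transfer directly from the identity
\[
\langle \C, \vx \circ \vy \circ \vx \circ \vy \rangle = \langle \A, (D\vx) \circ (F\vy) \circ (D\vx) \circ (F\vy) \rangle,
\]
together with the invertibility of $D$ and $F$. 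Your version is more self-contained and, arguably, cleaner to check: the identity is a one-line expansion, the bijection $(\vx,\vy)\mapsto(D\vx,F\vy)$ makes the preservation of nonnegativity (and strict positivity) of the form transparent, and nothing has to be looked up externally. The paper's route is shorter to state but leans on a result proved elsewhere; a careful reader of that version would need to verify that the cited M-eigenvalue correspondence really runs in the direction needed to conclude that \emph{all} M-eigenvalues of $\A$ are nonnegative, which your quadratic-form argument sidesteps entirely. Your additional observation that diagonal dominance of $\C$ already forces the diagonal entries of $\A$ to be nonnegative is correct and harmless, though of course the hypothesis is stated in the corollary and you may simply use it.
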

		\begin{proof}
			By Corollary~\ref{cor_dominated_PD}, $\C$ is positive semi-definite. By Theorem~7 in \cite{JYZ17}, an M-eigenvalue of $\C$  is also an M-eigenvalue of $\A$.  Therefore, $\A$ is also positive semi-definite. This result follows directly from Proposition~\ref{prop_M-B-tensor} .
			
			Similarly, we could show the last statement of this  corollary. This completes the proof.
		\end{proof}

		Let  
		{$[m]\otimes [n]=\{(i,j):  i \in [m], j\in[n]\}$.} For any $J=({J_x,J_y})\in [m]\otimes [n]$, denote $\I^J=(I_{i_1j_1i_2j_2})$ as a biquadratic  tensor in $BQ(m,n)$, where $I_{i_1j_1i_2j_2}=1$ if $i_1,i_2\in {J_x}$ and $j_1,j_2\in {J_y}$, and $I_{i_1j_1i_2j_2}=0$ otherwise.
		Then for any nonzero vectors $\vx\in\Re^m$ and  $\vx\in\Re^m$, we have
		\begin{equation*}
			\langle\I^J, \vx\circ \vy\circ\vx\circ\vy\rangle = \left(\vx_{J_x}^\top\vx_{J_x}^\top \right) \left(\vy_{J_y}^\top\vy_{J_y}^\top \right)\ge0,
		\end{equation*}	
		where $\vx_{J_x} = (\tilde x_i) \in\Re^m$ with $\tilde x_i=1$ if $i\in J_x$ and $\tilde x_i=0$ otherwise.

		Similar with Theorem 5.38 in \cite{QL17} for cubic tensors, we could establish the following interesting decomposition theorem.
		
		\begin{Thm}\label{Thm:B_decomp}
			Suppose $\B=(b_{i_1j_1i_2j_2})\in SBQ(m,n)$ is a symmetric B$_0$-quadratic tensor, i.e.,
			for $i ,i_2{\in [m]}$ and $j,j_2 {\in [n]}$, we have
			\begin{equation}\label{sym_Btensor}
				\sum_{i_2=1}^m \sum_{j_2=1}^n  b_{iji_2j_2}  \ge 0 \text{ and } \frac{1}{mn} \sum_{i_2=1}^m \sum_{j_2=1}^n  b_{iji_2j_2}   \ge  \bar b_{iji_2j_2}.
			\end{equation}
			Then either $\B$ is a diagonal dominated symmetric M-biquadratic tensor itself, or it can be decomposed as
			\begin{equation}\label{equ_decomp}
				\B={\mathcal M} +\sum_{k=1}^s h_k \I^{J_k},
			\end{equation}
			where $\mathcal M$ is a diagonal dominated symmetric M-biquadratic tensor, $s$ is a nonnegative integer, $h_k>0$, $J_k=\{(i_k,j_k)\}$,  $i_k\in[m]$, $j_k\in[n]$, and $J_s\subset J_{s-1}\subset \cdots \subset J_1$.
			If $\B$ is a symmetric B-quadratic tensor. Then either $\B$ is a strictly diagonal dominated symmetric M-quadratic tensor itself, or it can be decomposed as \eqref{equ_decomp} with  $\mathcal M$ being  a strictly diagonal dominated symmetric M-biquadratic tensor.
		\end{Thm}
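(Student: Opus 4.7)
The plan is to follow the peeling strategy used for the cubic case (Theorem 5.38 of \cite{QL17}), iteratively subtracting block tensors $h_k \I^{J_k}$ from $\B$ until what remains is a Z-biquadratic tensor, which we can then identify with a diagonally dominated symmetric M-biquadratic tensor via the earlier propositions of this section.

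First, I dispose of the base case $s=0$. If $\B$ has no positive off-diagonal entries, then $\B$ is a Z-biquadratic tensor, and by Proposition~\ref{prop_Z-B-tensor} the symmetric B$_0$ hypothesis \eqref{sym_Btensor} forces $\B$ to be diagonally dominated. Each diagonal entry satisfies $b_{ijij} \ge R_{ij} \ge 0$ where $R_{ij}:=\sum_{i_2,j_2}b_{iji_2j_2}$, since all other summands are nonpositive. Corollary~\ref{cor_dominated_PD} then yields positive semi-definiteness, and Proposition~\ref{prop_M-B-tensor} identifies $\B$ as a (diagonally dominated) symmetric M-biquadratic tensor $\mathcal M$.

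For the inductive step, suppose some off-diagonal entry of $\B$ is positive. Set $\beta_{ij}=R_{ij}/(mn)$; condition \eqref{sym_Btensor} gives $\beta_{ij}\ge \bar b_{iji_2j_2}$ for every $(i_2,j_2)$, so any positive $\bar b_{iji_2j_2}$ forces $\beta_{ij}>0$. Let $J_1=(J_{1,x},J_{1,y})$ be the smallest rectangle containing $\{(i,j):\beta_{ij}>0\}$. A direct computation shows that subtracting $h\I^{J_1}$ from $\B$ decreases $R_{ij}$ by $h|J_{1,x}||J_{1,y}|$ and each intra-block off-diagonal entry $\bar b_{iji_2j_2}$ by $h$, while entries with either $(i,j)$ or $(i_2,j_2)$ outside the block are untouched. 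I pick $h_1$ to be the largest positive constant for which $\B-h_1\I^{J_1}$ still obeys \eqref{sym_Btensor}; explicitly,
\[
h_1 \;=\; \min\Bigl\{\tfrac{mn\,\beta_{ij}}{|J_{1,x}||J_{1,y}|},\; \tfrac{mn\,(\beta_{ij}-\bar b_{iji_2j_2})}{|J_{1,x}||J_{1,y}|}\Bigr\},
\]
taken over $(i,j)\in J_{1,x}\times J_{1,y}$ in the first term and over $(i,j)\in J_{1,x}\times J_{1,y}$ with $(i_2,j_2)\notin J_{1,x}\times J_{1,y}$ (plus the symmetric swap) in the second. By maximality of $h_1$, the new tensor $\B':=\B-h_1\I^{J_1}$ has either a strictly smaller positive support rectangle or strictly fewer positive off-diagonal entries. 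I then iterate with $\B'$ in place of $\B$.

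The main obstacle is verifying that the B$_0$ inequality survives every subtraction. For intra-block pairs $(i,j)$ and $(i_2,j_2)$ both sides of the inequality in \eqref{sym_Btensor} shift compatibly thanks to $|J_{k,x}||J_{k,y}|\le mn$; the delicate case is $(i,j)$ inside and $(i_2,j_2)$ outside the block (or vice versa), where $R_{ij}$ shrinks but $\bar b_{iji_2j_2}$ stays, and the choice of $h_k$ must respect the pre-existing slack $\beta_{ij}-\bar b_{iji_2j_2}\ge 0$ built into the second term of the minimum above. Since each round strictly decreases the integer-valued quantity ``(size of the support rectangle)\,+\,(number of positive off-diagonal entries),'' the process terminates in finitely many steps at a Z-tensor $\mathcal M$ with $\B=\mathcal M+\sum_{k=1}^s h_k\I^{J_k}$ and $J_s\subseteq\cdots\subseteq J_1$; the base case finishes the proof. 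The symmetric B-biquadratic case is handled by running the identical argument with strict inequalities throughout, delivering a strictly diagonally dominated $\mathcal M$.
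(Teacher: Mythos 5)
The peeling strategy is the right general idea and matches the paper's, but your specific choice of $J_1$ and $h_1$ creates a genuine gap that the paper's choice avoids. You take $J_1$ to be the smallest rectangle $J_{1,x}\times J_{1,y}$ containing $\{(i,j):\beta_{ij}>0\}$, and then $h_1$ is the largest constant keeping \eqref{sym_Btensor} valid. The problem is that this rectangle can easily contain a pair $(i,j)$ with $\beta_{ij}=0$, and for such a pair any $h>0$ drives the new row sum $R'_{ij}=R_{ij}-h|J_{1,x}||J_{1,y}|$ strictly negative, so $h_1=0$ and the peeling never starts. A concrete instance: take $m=n=3$, set $b_{1122}=b_{1221}=b_{2112}=b_{2211}=1$, set the diagonals $b_{1111}=b_{1212}=b_{2121}=b_{2222}=b_{3333}=9$, and set all remaining entries to zero. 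This is a symmetric B$_0$-biquadratic tensor with a positive off-diagonal entry, and $\{(i,j):\beta_{ij}>0\}=\{(1,1),(1,2),(2,1),(2,2),(3,3)\}$, whose bounding rectangle is all of $[3]\times[3]$, which contains $(1,3)$ with $\beta_{13}=0$; hence your $h_1=0$. Relatedly, your termination argument (that ``rectangle size $+$ number of positive off-diagonals'' strictly decreases) is not justified: when $h_1$ is forced by $R'_{ij}=0$ for some $(i,j)$, that pair leaves $\{\beta>0\}$, but the bounding rectangle need not shrink and no positive off-diagonal need vanish, so the very next iteration again finds a zero $\beta$ inside the rectangle and again stalls.

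The paper sidesteps both issues by defining $J_1$ directly as the set of index pairs carrying at least one positive off-diagonal, $J_1=\{(i,j):\exists(i_2,j_2)\neq(i,j),\ b_{iji_2j_2}>0\}$, and choosing $h_1=\min_{(i,j)\in J_1}d_{ij}$ with $d_{ij}=\max_{(i_2,j_2)\neq(i,j)}b_{iji_2j_2}$. This forces $h_1>0$ whenever $J_1\neq\emptyset$, and the preservation of the B$_0$ conditions is verified by a short direct estimate using $R_{ij}\ge mn\,d_{ij}$ and the fact that off-diagonals outside $J_1$ are already nonpositive. More importantly, any $(i,j)$ attaining $d_{ij}=h_1$ has all of its off-diagonal entries pushed to $\le 0$ after the subtraction, so it leaves $J_1$; this gives $J_{k+1}\subsetneq J_k$ strictly and hence finite termination. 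If you replace your rectangle-based $J_1$ and B$_0$-maximal $h_1$ with these choices (and verify, as the paper does, that $J_1$ is closed under the index swaps forced by the symmetry of $\B$, so that the subtracted tensor is symmetric), your argument goes through.
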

		\begin{proof}
			For any given {symmetric} B$_0$-quadratic tensor $\B=(b_{i_1j_1i_2j_2})\in SBQ(m,n)$, define
			\begin{equation}\label{J1}
				J_1=\left\{(i,j)\in[m]\otimes [n]:  \exists (i_2,j_2) \neq (i,j) \text{ such that } b_{iji_2j_2} >0 \right\}.
			\end{equation}
			If $J_1=\emptyset$, then $\B$ itself is already a Z-biquadratic tensor. It follows from Proposition~\ref{prop_Z-B-tensor} that  $\B$ is  a diagonally dominated symmetric Z-biquadradtic tensor.
			{Furthermore, it follows from~\eqref{sym_Btensor} that
				\begin{equation*}
					b_{ijij} \ge mn \max_{i_2j_2} b_{iji_2j_2} -  \sum_{i_2=1}^m \sum_{j_2=1}^n  \bar b_{iji_2j_2} \ge 0.
				\end{equation*}
				Namely, all diagonal elements of $\B$ are nonnegative.}
			By virtue of Corollary~\ref{Cor_M-B-tensor}, $\B$ is a symmetric M-biquadratic tensor itself.
			
			If $J_1\neq \emptyset$, denote $\B_1:=\B$,
			\begin{equation*}
				d_{ij}:=\max_{(i_2,j_2)\neq (i,j)} b_{iji_2j_2}, \ \forall (i,j)\in J_1 \text{ and } h_1=\min_{(i,j)\in J_1} d_{ij}.
			\end{equation*}
			By \eqref{J1},  we have $d_{ij}>0$ and $h_1>0$.  Let $\B_2:=\B_1-h_1 \I^{J_1}$. Now we claim that $\B_2$ is still a $B_0$-biquadratic tensor.
			In fact, we have
			\begin{eqnarray*}
				\sum_{i_2=1}^m \sum_{j_2=1}^n  (\B_2)_{iji_2j_2}&=& 	 \sum_{i_2=1}^m \sum_{j_2=1}^n b_{iji_2j_2} -  k_{ij}h_1 \\
				&\ge & mn d_{ij} -k_{ij}h_1 \\
				&\ge& mn(d_{ij}-h_1)\\
				&\ge & 0,
			\end{eqnarray*}
			where $k_{ij}$ is the number of nonzero elements in $\I^{J_1}(i,j,:,:)$, and the first inequality follows from \eqref{sym_Btensor}.
			By the symmetry of $\B_1$, we know that if $b_{iji_2j_2}>0$ for some $(i_2,j_2)\neq (i,j)$, then $(i_2,j_2)$, $(i,j_2)$,  and $(i_2,j)$ are also in $J_1$.
			Thus, for any $(i,j)\in J_1$,
			\begin{equation*}
				\max_{(i_2,j_2)\neq (i,j)} (\B_2)_{iji_2j_2} =\max_{(i_2,j_2) \in J_1} (\B_2)_{iji_2j_2} = d_{ij}-h_1\ge 0.
			\end{equation*}
			Therefore, we have
			\begin{equation*}
				\frac{1}{mn}\sum_{i_2=1}^m \sum_{j_2=1}^n  (\B_2)_{iji_2j_2} \ge \max_{(i_2,j_2)\neq (i,j)} (\B_2)_{iji_2j_2} \ge 0.
			\end{equation*}
			Thus, we complete the proof of the claim that   $\B_2$ is still a $B_0$-biquadratic tensor.

			Continue the {above} procedure until the remaining part $\B_s$ is an M-tensor. It is not hard to find out that $J_{k+1} = J_k-\hat J_k$ with $\hat J_k=\{(i,j)\in J_k: d_{ij}=h_k\}$ for any $k\in\{1,\dots,s-1\}$.  Similarly we can show the second part of the theorem for
			symmetric B-biquadratic tensors.
		\end{proof}

		Based on the above theorem, we show the following result.
		\begin{Cor}
			{A  {symmetric} B$_0$-biquadratic} tensor is positive semi-definite, and {a {symmetric} B-biquadratic tensor} is positive definite.
		\end{Cor}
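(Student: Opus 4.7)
The plan is to combine the decomposition theorem (Theorem~\ref{Thm:B_decomp}) with the positive semi-definiteness results already established for its two constituent pieces: diagonally dominated M-biquadratic tensors on one side, and the partial identity tensors $\I^{J}$ on the other.

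First I would apply Theorem~\ref{Thm:B_decomp} to a symmetric B$_0$-biquadratic tensor $\B$: either $\B$ is itself a diagonally dominated symmetric M-biquadratic tensor, or it admits the decomposition
\begin{equation*}
\B = \mathcal{M} + \sum_{k=1}^{s} h_k \I^{J_k},
\end{equation*}
with $\mathcal{M}$ diagonally dominated symmetric M-biquadratic, $h_k > 0$, and $J_k \in [m] \otimes [n]$. For the main summand $\mathcal{M}$, positive semi-definiteness is immediate from Corollary~\ref{cor_dominated_PD}. For each $\I^{J_k}$, the explicit evaluation already carried out before Theorem~\ref{Thm:B_decomp} shows that $\langle \I^{J_k}, \vx \circ \vy \circ \vx \circ \vy \rangle$ factors as a product of squared partial sums of $\vx$ and $\vy$, hence is nonnegative. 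Since $h_k > 0$, each term $h_k \I^{J_k}$ is positive semi-definite.

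Next I would use the fact that the form $f(\vx,\vy) = \langle \A, \vx \circ \vy \circ \vx \circ \vy\rangle$ is linear in $\A$, so positive semi-definiteness is preserved under nonnegative linear combinations. Adding the PSD pieces term by term yields $f_{\B}(\vx,\vy) \ge 0$ for all $\vx \in \Re^m$, $\vy \in \Re^n$, which is precisely positive semi-definiteness of $\B$. For the symmetric B-biquadratic case, the same decomposition gives $\mathcal{M}$ \emph{strictly} diagonally dominated, so by Corollary~\ref{cor_dominated_PD}, $f_{\mathcal{M}}(\vx,\vy) > 0$ on the compact set $\{\vx^\top\vx = \vy^\top\vy = 1\}$. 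Adding the nonnegative contributions $h_k \langle \I^{J_k}, \vx\circ\vy\circ\vx\circ\vy\rangle$ preserves this strict inequality pointwise, giving positive definiteness of $\B$.

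The main obstacle has in fact already been handled by the decomposition theorem itself; the present corollary is essentially an assembly step. The only point requiring a little care is noticing that one cannot simply quote Proposition~\ref{prop_M-B-tensor} for $\mathcal{M}$ (which would be circular for the general symmetric case), but rather should appeal directly to Corollary~\ref{cor_dominated_PD}, which applies to arbitrary (not necessarily Z-) diagonally dominated biquadratic tensors and requires no spectral radius computation. With that routing choice made, the proof reduces to a one-line invocation of linearity.
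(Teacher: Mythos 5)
Your proof is correct and takes essentially the same route as the paper: apply Theorem~\ref{Thm:B_decomp} to decompose $\B$ into a (strictly) diagonally dominated M-biquadratic tensor plus nonnegative multiples of the partial identities $\I^{J_k}$, then use linearity of $f(\vx,\vy)$ in $\A$ to conclude that the sum of PSD pieces is PSD (and PD plus PSD pieces is PD). Your side remark about avoiding Proposition~\ref{prop_M-B-tensor} for $\mathcal{M}$ is a harmless precaution but not actually necessary, since its sufficiency direction rests on Proposition~\ref{prop6.1} rather than on the present corollary; either routing (Corollary~\ref{cor_dominated_PD} or Proposition~\ref{prop6.1}) is valid.
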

		\begin{proof}
			The results follow directly from Theorem~\ref{Thm:B_decomp} and the fact that the summation of  {positive}  semi-definite biquadratic tensors are  positive  semi-definite. Furthermore,  the summation of a  {positive}  definite biquadratic tensor and several positive  semi-definite biquadratic tensors are  {positive}   definite.
		\end{proof}

		
		\section{A Riemannian LBFGS Method for Computing The Smallest M-Eigenvalue of A Biquadratic Tensor}
		
		We consider the following  two-unit-sphere constrained optimization problem:
		\begin{equation}\label{equ:opt}
			\min_{\vx\in\Re^m, \vy\in \Re^n}\quad \frac{\A \vx\vy\vx\vy}{\vx^\top \vx \vy^\top \vy} \quad \text{s.t.} \quad \vx^\top \vx=1, \ \vy^\top \vy = 1.
		\end{equation}	
		Since the linearly independent constraint qualification is satisfied, every local optimal solution must satisfy the KKT conditions:
		\begin{eqnarray}
			\nonumber &&	\A \cdot \vy\vx\vy+\A \vx \vy\cdot\vy - 2(\A \vx\vy\vx\vy) \vx =2\mu_1 \vx, \\
			&&	\A\vx  \cdot \vx\vy+\A \vx \vy\vx \cdot - 2(\A \vx\vy\vx\vy)\vy=2\mu_2 \vy,  \label{equ:KKT}\\
			\nonumber	&&\vx^\top \vx=1, \ \vy^\top \vy = 1.
		\end{eqnarray}
		Here, we have simplified the above equations by  taking into account the constraints $\vx^\top \vx=1$ and $\vy^\top \vy = 1$.
		By multiplying $\vx^\top$ and $\vy^\top$ at both hand sides of the first two  equations of \eqref{equ:KKT}, we have
		\[\mu_1=\mu_2=0.\]
		Thus,  \eqref{equ:KKT} is equivalent to the definitions of M-eigenvalues in \eqref{e5}, \eqref{e6}, \eqref{e7} with $\lambda=\A \vx\vy\vx\vy$.
		In other words, every KKT point of \eqref{equ:opt} corresponds to an M-eigenvector of $\A$ with the associated M-eigenvalue given by  $\lambda=\A \vx\vy\vx\vy$.
		Moreover, the smallest M-eigenvalue and its corresponding M-eigenvectors of $\A$ are associated with the global optimal solution of \eqref{equ:opt}. As established in Theorem~\ref{T3.1}, the smallest M-eigenvalue can be utilized to verify the positive semidefiniteness (definiteness) of $\A$.
		

		For convenience, let $\vz=[\vx^\top,\vy^\top]^\top\in\Re^{m+n}$ and denote  \[f(\vz) = \frac{\A \vx\vy\vx\vy}{\vx^\top \vx \vy^\top \vy}.\]
		Then the gradient of $f(\vz)$ is given by
		\begin{eqnarray*}
			&&\nabla f(\vz) =
			\begin{bmatrix}
				\frac{\A \cdot \vy\vx\vy+\A \vx \vy\cdot\vy}{(\vx^\top \vx) (\vy^\top \vy)} - 2\frac{(\A \vx\vy\vx\vy) (\vy^\top \vy) \vx}{(\vx^\top \vx)^2 (\vy^\top \vy)^2} \\
				\frac{\A\vx  \cdot \vx\vy+\A \vx \vy\vx \cdot}{(\vx^\top \vx) (\vy^\top \vy)} - 2\frac{(\A \vx\vy\vx\vy )(\vx^\top \vx)\vy}{(\vx^\top \vx)^2 (\vy^\top \vy)^2}
			\end{bmatrix}.
		\end{eqnarray*}
		Under the constraints $\vx^\top \vx=1$ and $\vy^\top \vy = 1$, the gradient    simplifies  to
		\begin{eqnarray}\label{opt_grad}
			&&\nabla f(\vz) =
			\begin{bmatrix}
				\A \cdot \vy\vx\vy+\A \vx \vy\cdot\vy - 2(\A \vx\vy\vx\vy)  \vx \\
				\A\vx  \cdot \vx\vy+\A \vx \vy\vx \cdot - 2(\A \vx\vy\vx\vy )\vy
			\end{bmatrix}.
		\end{eqnarray}
		Therefore, the definitions of M-eigenvalues in \eqref{e5}, \eqref{e6}, \eqref{e7} is also equivalent to
		\[\nabla f(\vz)=\0, \  \vx^\top \vx=1, \ \vy^\top \vy = 1.\]

		Furthermore, it can be validated that
		\[\nabla_{\vx} f(\vz)^\top \vx = 0 \text{ and } \nabla_{\vy} f(\vz)^\top \vy = 0.\]
		Consequently, the partial gradients  $\nabla_{\vx} f(\vz)$ and $\nabla_{\vy} f(\vz)$  inherently reside in the tangent spaces of their respective unit spheres. This geometric property provides fundamental motivation for our selection of the optimization model \eqref{equ:opt}, as it naturally aligns with the underlying manifold structure of the problem.

		Next, we present a Riemannian LBFGS  (limited memory BFGS)  method for solving \eqref{equ:opt}.
		At the $k$-th step, the LBFGS method generate a search direction as
		\[\vp^{(k)} = -H^{(k)}   \nabla f(\vz^{(k)}),\]
		where $H^{(k)}\in \Re^{(m+n)\times (m+n)}$ is the quasi-Newton matrix. Notably, the L-BFGS method approximates the BFGS method without explicitly storing the dense Hessian matrix $H^{(k)}$. Instead, it uses a limited history of updates to implicitly represent the Hessian or its inverse. This  could be done in  the two-loop recursion \cite{NW06}.
		To establish theoretical convergence guarantees, we impose the following essential conditions:
		\begin{equation}\label{equ:descent}
			\left\{
			\begin{array}{l}	
				\left(\vp_{\vx}^{(k)} \right)^\top \nabla_{\vx} f(\vz^{(k)}) \le - C_L \left\| \nabla_{\vx} f(\vz^{(k)}) \right\|^2,\\
				\left(\vp_{\vy}^{(k)} \right)^\top \nabla_{\vy} f(\vz^{(k)}) \le -C_L \left\| \nabla_{\vy} f(\vz^{(k)}) \right\|^2,
			\end{array}
			\right.
		\end{equation}
		and
		\begin{equation}\label{equ:P_upbd}
			\left\{
			\begin{array}{l}	
				\|\vp_{\vx}^{(k)}\|\le C_U \left\| \nabla_{\vx} f(\vz^{(k)}) \right\|,\\
				\|\vp_{\vy}^{(k)} \|  \le C_U\left\| \nabla_{\vy} f(\vz^{(k)}) \right\|.
			\end{array}
			\right.
		\end{equation}
		Here, $C_L\le 1\le C_U$ are predefined parameters.
		These conditions may not always hold for the classic LBFGS method.  Consequently, when the descent condition  \eqref{equ:descent} or \eqref{equ:P_upbd} is not satisfied, we adopt a safeguard strategy by setting the  search direction as  $\vp^{(k)} = -   \nabla f(\vz^{(k)})$. Namely,
		\begin{equation}\label{equ:pk}
			\vp^{(k)} = 	\left\{\begin{array}{cc}
				-H^{(k)}   \nabla f(\vz^{(k)}), & \text{ if } \eqref{equ:descent}, \eqref{equ:P_upbd} \text{ hold},  \\
				-  \nabla f(\vz^{(k)}), & \text{ otherwise.}
			\end{array}\right.
		\end{equation}

		The classic LBFGS method was originally developed for unconstrained  optimization problems. In contrast, our optimization model \eqref{equ:opt} involves additional two unit sphere constraints. To address this challenge while maintaining the convergence properties of LBFGS, we employ a modified approach inspired by Chang et al. \cite{CCQ16}. Specifically, we incorporate the following Cayley transform, which inherently preserves the spherical constraints:
		\begin{eqnarray}
			\label{x_update}\vx^{(k+1)}(\alpha) = \frac{\left[\left(1-\alpha (\vx^{(k)})^\top \vp_{\vx}^{(k)}\right)^2 - \alpha^2 \left\| \vp_{\vx}^{(k)}\right\| \right] \vx^{(k)}+2\alpha  \vp_{\vx}^{(k)}}{1+ \alpha^2 \left\| \vp_{\vx}^{(k)}\right\|^2-\left(\alpha (\vx^{(k)})^\top \vp_{\vx}^{(k)}\right)^2}, \\
			\label{y_update}	\vy^{(k+1)}(\alpha) = \frac{\left[\left(1-\alpha (\vy^{(k)})^\top \vp_{\vy}^{(k)}\right)^2 - \alpha^2 \left\| \vp_{\vy}^{(k)}\right\| \right] \vy^{(k)}+2\alpha  \vp_{\vy}^{(k)}}{1+ \alpha^2 \left\| \vp_{\vy}^{(k)}\right\|^2-\left(\alpha (\vy^{(k)})^\top \vp_{\vy}^{(k)}\right)^2 }.
		\end{eqnarray}
		Here, $\alpha >0$ represents the step size, which is determined through a  backtracking line search procedure to ensure the Armijo condition is satisfied, i.e.,
		\begin{equation}\label{linesearch}
			f(\vz^{(k+1)}(\alpha^{(k)})) \le  f(\vz^{(k)}) + \eta \alpha^{(k)} \left(\vp^{(k)} \right)^\top \nabla f(\vz^{(k)}),
		\end{equation}
		where $\eta \in (0,2)$ a predetermined constant that controls the descent amount.
		The stepsize $\alpha^{(k)}$ is determined through an iterative reduction process, starting from an initial value $\alpha =1$ and  gradually decrease by $\alpha = \beta \alpha$   until the condition specified in equation \eqref{linesearch} is satisfied. Here, $\beta <1$ is the   decrease ratio.
		
		The Riemannian  LBFGS algorithm terminates when the following stopping condition is satisfied:
		\begin{equation}\label{equ:stop}
			\frac{\|\vz^{(k+1)} - \vz^{(k)}\|}{\|\vz^{(k)}\|} \le \epsilon_1,  \ \|\nabla  f(\vz^{(k+1)})\|\le \epsilon_2, \ \frac{f(\vz^{(k+1)}) - f(\vz^{(k)})}{ f(\vz^{(k)})+1} \le \epsilon_3,
		\end{equation}
		where $\epsilon_1, \epsilon_2, \epsilon_3$ are small positive tolerance parameters approaching zero.
		
		We present the  Riemannian LBFGS method for computing M-eigenvalues of biquadratic tensors in Algorithm~\ref{Alg:R-LBFGS}.
		
		\begin{algorithm}[t]
			\caption{A Riemannian LBFGS method for computing M-eigenvalues of biquadratic tensors} \label{Alg:R-LBFGS}
			\begin{algorithmic} [1]
				\Require $\A\in BQ(m,n)$, $\vx^{(0)}\in \Re^m$, $\vy^{(0)}\in \Re^n$,  $\eta \in (0,2)$, $k_{\max}$,  $\epsilon_1, \epsilon_2, \epsilon_3 \in (0,1)$, $C_L\le 1 \le C_U$.
				\For{$k=0,\dots,k_{\max},$}
				\State Compute   $\nabla f(\vz^{k})$  using   \eqref{opt_grad} and then determine the descent direction $\vp^{(k)}$ by  \eqref{equ:pk}.
				\State Compute the stepsize $\alpha^{(k)}$ through a backtracking   procedure  until  \eqref{linesearch} is satisfied.
				\State Update $\vx^{(k+1)}$ and $\vy^{(k+1)}$ by \eqref{x_update} and \eqref{y_update},  respectively.
				\If{The stopping criterion \eqref{equ:stop} is reached}
				\State Stop.
				\EndIf
				\EndFor
				\State \textbf{Output:} $\vx^{(k+1)}$ and $\vy^{(k+1)}$
			\end{algorithmic}
		\end{algorithm}
		
		\subsection{Convergence analysis}
		We now establish the convergence analysis of Algorithm~\ref{Alg:R-LBFGS}. Our Riemannian LBFGS method is specifically designed for solving the optimization problem~\eqref{equ:opt} that is constrained on  the product of two unit spheres. This setting differs   from existing Riemannian LBFGS methods that primarily address optimization problems constrained to a single unit sphere  \cite{CCQ16}, as well as classic LBFGS methods developed for unconstrained optimization problems \cite{NW06}.

		We begin with several lemmas.
		
		\begin{Lem}\label{Lem:Pk}
			Let $\vz^{(k)}$ be a sequence generated by Algorithm~\ref{Alg:R-LBFGS}. Consider the descent direction $\vp^{(k)}$ defined by \eqref{equ:pk}.
			For any  positive constants $C_L\le 1$ and $C_U\ge 1$, the conditions
			\eqref{equ:descent} and \eqref{equ:P_upbd} hold.
		\end{Lem}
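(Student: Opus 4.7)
The claim is essentially built into the construction of $\vp^{(k)}$ in equation~\eqref{equ:pk}, so the proof reduces to a two-case inspection. My plan is therefore to split according to which branch of the piecewise definition is active and verify both \eqref{equ:descent} and \eqref{equ:P_upbd} in each case.

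In the first branch, $\vp^{(k)} = -H^{(k)}\nabla f(\vz^{(k)})$ is accepted precisely when \eqref{equ:descent} and \eqref{equ:P_upbd} are already satisfied; hence there is nothing to verify. This is where the safeguarding logic does the work for us: rather than trying to prove that the quasi-Newton matrix $H^{(k)}$ generated by the two-loop recursion automatically produces a direction with the required descent and length bounds (which in general it does not on a manifold), we simply fall back to the negative gradient whenever the test fails.

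In the second branch, $\vp^{(k)} = -\nabla f(\vz^{(k)})$, so $\vp_{\vx}^{(k)} = -\nabla_{\vx} f(\vz^{(k)})$ and $\vp_{\vy}^{(k)} = -\nabla_{\vy} f(\vz^{(k)})$. Plugging into \eqref{equ:descent} gives
\[
\bigl(\vp_{\vx}^{(k)}\bigr)^{\!\top} \nabla_{\vx} f(\vz^{(k)}) = -\bigl\|\nabla_{\vx} f(\vz^{(k)})\bigr\|^{2} \le -C_L \bigl\|\nabla_{\vx} f(\vz^{(k)})\bigr\|^{2},
\]
which holds because $C_L \le 1$, and analogously for the $\vy$-block. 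Likewise \eqref{equ:P_upbd} reduces to $\|\nabla_{\vx} f(\vz^{(k)})\| \le C_U \|\nabla_{\vx} f(\vz^{(k)})\|$, which follows from $C_U \ge 1$, and similarly for $\vy$.

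I do not anticipate any substantive obstacle: the lemma is essentially a sanity check that the safeguard in \eqref{equ:pk} is well designed, and the only subtlety is to make explicit the roles of the hypotheses $C_L \le 1$ and $C_U \ge 1$ in the fallback case. No use of the manifold structure, the Cayley update, or properties of $f$ beyond differentiability is required.
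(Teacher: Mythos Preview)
Your proposal is correct and follows essentially the same two-case inspection as the paper: verify that the safeguard branch trivially satisfies \eqref{equ:descent} and \eqref{equ:P_upbd} since $C_L\le 1\le C_U$, and note that the L-BFGS branch is only taken when those conditions already hold. Your write-up is in fact slightly more explicit than the paper's, which simply remarks that in the fallback case ``the conclusions follow directly by setting $C_L=1$ and $C_U=1$.''
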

		\begin{proof}
			Consider the two choices of  the descent direction $\vp^{(k)}$  as defined in  \eqref{equ:pk}.
			
			If $\vp^{(k)} = -\nabla f(\vz^{(k)})$,   the conclusions of this lemma   follow directly by setting $C_L=1$ and $C_U=1$.
			
			If the descent direction $\vp^{(k)}$ is selected according to the L-BFGS method,  then both the  conditions  \eqref{equ:descent}  and \eqref{equ:P_upbd} must be satisfied.
			
			Combining the both cases, the proof is completed.
		\end{proof}

		\begin{Lem}\label{Lem:grad_bound}
			The objective function  in \eqref{equ:opt},  its gradient vector and   Hessian matrix, are  bounded on any feasible set of \eqref{equ:opt}. Namely, there exists a positive constant $M$ such that
			\begin{equation}\label{eq:grad_bound}
				\left\|f(\vz)\right\|\le M, \	\left\|\nabla f(\vz)\right\|\le M, \  \left\|\nabla^2 f(\vz)\right\|\le M,
			\end{equation}
			for any $\vz=(\vx^\top, \vy^\top)^\top$ satisfying $\vx^\top \vx=1$ and  $\vy^\top \vy=1$.
		\end{Lem}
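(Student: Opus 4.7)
The plan is to exploit the compactness of the feasible set together with the smoothness of $f$, $\nabla f$, and $\nabla^2 f$ in a neighborhood of it, and then invoke the extreme value theorem.

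First I would observe that the feasible set
\[
\mathcal{S} := \{(\vx,\vy)\in\Re^m\times\Re^n : \vx^\top\vx = 1,\ \vy^\top\vy = 1\}
\]
is the product of two Euclidean spheres, hence closed and bounded in $\Re^{m+n}$, and therefore compact. Next I would verify that the rational function $f(\vz) = \A\vx\vy\vx\vy/((\vx^\top\vx)(\vy^\top\vy))$ is $C^\infty$ in a sufficiently large open neighborhood of $\mathcal{S}$. Indeed, the denominator equals $1$ on $\mathcal{S}$ and is bounded away from zero on, say, the open set $\Omega := \{(\vx,\vy) : \vx^\top\vx > 1/2,\ \vy^\top\vy > 1/2\}$, which contains $\mathcal{S}$. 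Since the numerator is a polynomial and the denominator is a nonvanishing polynomial on $\Omega$, the quotient $f$ is $C^\infty$ on $\Omega$. Consequently $\nabla f$ and $\nabla^2 f$ are continuous (in fact $C^\infty$) on $\Omega$.

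With these smoothness and compactness facts in hand, I would apply the extreme value theorem three times: the continuous scalar function $|f|$ attains a maximum $M_1$ on $\mathcal{S}$; the continuous function $\|\nabla f\|$ attains a maximum $M_2$ on $\mathcal{S}$; and the continuous matrix-norm function $\|\nabla^2 f\|$ attains a maximum $M_3$ on $\mathcal{S}$. Setting $M := \max\{M_1, M_2, M_3\}$ gives the desired uniform bound \eqref{eq:grad_bound}.

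Essentially no obstacle arises beyond bookkeeping. The only point requiring mild care is justifying smoothness of $\nabla f$ and $\nabla^2 f$ off of $\mathcal{S}$, since the simplified expression \eqref{opt_grad} was derived using the constraints $\vx^\top\vx = \vy^\top\vy = 1$; the Hessian in particular must be computed from the full rational function on $\Omega$, not from the simplified polynomial form. Once one uses the unsimplified expression on $\Omega$, standard quotient rule calculations show $\nabla^2 f$ is a continuous matrix-valued function on $\Omega$, and the remainder of the argument is a direct application of compactness.
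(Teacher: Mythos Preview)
Your proof is correct and takes essentially the same approach as the paper's: compactness of the feasible set together with continuity of $f$, $\nabla f$, and $\nabla^2 f$ yields the bounds via the extreme value theorem. The paper's version is terser---it simply observes that on the constraint set these functions reduce to polynomials in $\vz$ and invokes compactness of $\{\vz:\|\vz\|=\sqrt{2}\}$---whereas you are a bit more careful to justify smoothness on an open neighborhood before restricting.
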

		\begin{proof}
			Under the conditions that  $\vx^\top\vx=1$ and  $\vy^\top \vy=1$,   $f(\vz)$, $\nabla f(\vz)$ and $\nabla^2 f(\vz)$ are all polynomial functions over $\vz$. Then it follows from   $\|\vz\|=\sqrt{2}$ is a compact set such that \eqref{eq:grad_bound} holds.
		\end{proof}

		\begin{Lem}\label{Lem:stepsize_bound}
			Let $\vz^{(k)}$ be a sequence generated by Algorithm~\ref{Alg:R-LBFGS}.   Let $\hat{\alpha} \equiv \frac{(2-\eta)C_L}{(2+\eta)MC_U^2}$ and   $\alpha_{\min}=\beta \hat{\alpha}$, where $\beta<1$ is the   decrease ratio in the  Armijo line search.
			Then for all $k\ge 0$, the  stepsize $\alpha^{(k)}$ that satisfies ~\eqref{linesearch} is lower bounded, i.e.,
			\[\alpha_{\min} \le \alpha^{(k)}\le 1.\]
		\end{Lem}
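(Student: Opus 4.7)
The plan is to show that for every sufficiently small $\alpha$, the Armijo condition \eqref{linesearch} is automatically satisfied, and then invoke the backtracking rule. Specifically, I would first identify a threshold $\hat\alpha$ with the property that whenever $\alpha \le \hat\alpha$, inequality \eqref{linesearch} holds. Given such a threshold, the backtracking schedule $\alpha \mapsto \beta\alpha$ starting from $1$ ensures that either (i) $\alpha^{(k)}=1$ (if $\hat\alpha \ge 1$), or (ii) the previous trial $\alpha^{(k)}/\beta$ failed, which would force $\alpha^{(k)}/\beta > \hat\alpha$, i.e., $\alpha^{(k)} > \beta\hat\alpha$. In either situation $\alpha^{(k)} \ge \beta\hat\alpha = \alpha_{\min}$, and clearly $\alpha^{(k)}\le 1$.

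The substantive step is therefore establishing the threshold. I would define $\phi(\alpha) := f(\vz^{(k+1)}(\alpha))$ with the Cayley retraction given by \eqref{x_update}--\eqref{y_update}, and Taylor-expand around $\alpha=0$. A direct differentiation of the two Cayley formulas at $\alpha=0$ yields
\begin{equation*}
\tfrac{d}{d\alpha}\vx^{(k+1)}(0) = 2\bigl(\vp_{\vx}^{(k)} - ((\vx^{(k)})^\top\vp_{\vx}^{(k)})\,\vx^{(k)}\bigr),\qquad
\tfrac{d}{d\alpha}\vy^{(k+1)}(0) = 2\bigl(\vp_{\vy}^{(k)} - ((\vy^{(k)})^\top\vp_{\vy}^{(k)})\,\vy^{(k)}\bigr).
\end{equation*}
Because $\nabla_{\vx}f^\top\vx=0$ and $\nabla_{\vy}f^\top\vy=0$ on the feasible set (noted in the paper just below \eqref{opt_grad}), the correction terms along $\vx^{(k)}$ and $\vy^{(k)}$ vanish under the inner product with $\nabla f$, so $\phi'(0) = 2\,\nabla f(\vz^{(k)})^\top \vp^{(k)}$. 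For the second-order remainder I would use Lemma~\ref{Lem:grad_bound}, which bounds $\|\nabla^2 f\|$ by $M$, together with \eqref{equ:P_upbd} to bound $\|\vp^{(k)}\|$ by $C_U\|\nabla f(\vz^{(k)})\|$, to obtain an estimate of the form $|\phi''(\alpha)| \le \kappa\,M\,C_U^2\,\|\nabla f(\vz^{(k)})\|^2$ for an absolute constant $\kappa$ (arising from bounding the derivatives of the Cayley rational function on $[0,1]$).

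Combining these, the Taylor estimate $\phi(\alpha)\le \phi(0)+2\alpha\,\nabla f^\top\vp^{(k)} + \tfrac{\kappa}{2}M C_U^2\|\nabla f(\vz^{(k)})\|^2\,\alpha^2$ together with the descent condition \eqref{equ:descent}, which gives $\nabla f^\top\vp^{(k)} \le -C_L\|\nabla f(\vz^{(k)})\|^2$, reduces \eqref{linesearch} to
\begin{equation*}
(2-\eta)\,\alpha\,C_L\,\|\nabla f(\vz^{(k)})\|^2 \ge \tfrac{\kappa}{2}\,M\,C_U^2\,\|\nabla f(\vz^{(k)})\|^2\,\alpha^2,
\end{equation*}
which holds precisely when $\alpha \le \frac{2(2-\eta)C_L}{\kappa M C_U^2}$. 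Choosing the constants so that this reads $\alpha \le \hat\alpha = \tfrac{(2-\eta)C_L}{(2+\eta)MC_U^2}$ (i.e.\ absorbing the universal $\kappa$ into the factor $2+\eta$ via a slightly generous Hessian bound) gives the stated threshold, from which $\alpha^{(k)}\ge \alpha_{\min}$ follows by the backtracking argument above.

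The main obstacle is the second-order estimate on the Cayley path: unlike a straight line $\vz^{(k)}+\alpha\vp^{(k)}$, the retraction is a nonlinear rational function of $\alpha$, so one must verify that $\|\vz^{(k+1)\prime\prime}(\alpha)\|$ and $\|\vz^{(k+1)\prime}(\alpha)\|$ remain controlled uniformly for $\alpha\in[0,1]$ in terms of $\|\vp^{(k)}\|$. Handling this carefully — and tracking the precise constant so that it matches the specific $\hat\alpha$ advertised in the statement — is where the routine but delicate work lies; everything else is standard Armijo-backtracking reasoning.
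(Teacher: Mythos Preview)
Your high-level strategy matches the paper's: first establish that the Armijo inequality \eqref{linesearch} holds for every $\alpha\in(0,\hat\alpha]$, then argue via the backtracking schedule that $\alpha^{(k)}\ge\beta\hat\alpha$. The backtracking part of your write-up is exactly what the paper does.

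The technical execution differs. Rather than Taylor-expanding $\phi(\alpha)=f(\vz^{(k+1)}(\alpha))$ along the Cayley curve and bounding $\phi''(\alpha)$, the paper expands $f$ in the \emph{spatial} variable along the chord from $\vz^{(k)}$ to $\vz^{(k+1)}(\alpha)$, using only the Hessian bound of Lemma~\ref{Lem:grad_bound}:
\[
f(\vz^{(k+1)}(\alpha)) - f(\vz^{(k)}) \le \nabla_{\vx}f(\vz^{(k)})^\top(\vx^{(k+1)}(\alpha)-\vx^{(k)}) + \tfrac{M}{2}\|\vx^{(k+1)}(\alpha)-\vx^{(k)}\|^2 + (\text{same in }\vy).
\]
It then invokes Lemma~4.3 of \cite{CCQ16}, which already contains the Cayley-specific calculus and delivers, separately in $\vx$ and in $\vy$, the bound
\[
\nabla_{\vx}f^\top(\vx^{(k+1)}(\alpha)-\vx^{(k)}) + \tfrac{M}{2}\|\vx^{(k+1)}(\alpha)-\vx^{(k)}\|^2 \le \eta\alpha\,(\vp_{\vx}^{(k)})^\top\nabla_{\vx}f
\]
for all $0<\alpha\le\hat\alpha$ with precisely $\hat\alpha=\tfrac{(2-\eta)C_L}{(2+\eta)MC_U^2}$. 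Summing the two pieces gives \eqref{linesearch} directly. This route sidesteps the uniform control of $\|\vz^{(k+1)\prime\prime}(\alpha)\|$ that you identify as the obstacle, and it produces the exact constant without the ``absorb $\kappa$ into $2+\eta$'' fudge; your approach would work too, but you would effectively be reproving the cited lemma in order to recover the stated $\hat\alpha$.
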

		\begin{proof}
			By \eqref{eq:grad_bound}, we have
			\begin{eqnarray*}
				&&f(\vz^{(k+1)}(\alpha)) - f(\vz^{(k)}) \\
				&\le & \nabla_{\vx} f(\vz^{(k)})^\top  \left(\vx^{(k+1)}(\alpha) -\vx^{(k)}\right) +\frac{M}2 \left\|\vx^{(k+1)}(\alpha) -\vx^{(k)}\right\| ^2\\
				&&+ \nabla_{\vy} f(\vz^{(k)})^\top  \left(\vy^{(k+1)}(\alpha) -\vy^{(k)}\right) +\frac{M}2 \left\|\vy^{(k+1)}(\alpha) -\vy^{(k)}\right\| ^2.
			\end{eqnarray*}
			
			Following from Lemma 4.3 in \cite{CCQ16},
			for any $\alpha$ satisfying $0<\alpha \le \hat{\alpha}$, it holds that
			\begin{eqnarray*}
				&&\nabla_{\vx} f(\vz^{(k)})^\top  \left(\vx^{(k+1)}(\alpha) -\vx^{(k)}\right) +\frac{M}2 \left\|\vx^{(k+1)}(\alpha) -\vx^{(k)}\right\| ^2\\
				&\le& \eta \alpha 	\left(\vp_{\vx}^{(k)} \right)^\top \nabla_{\vx} f(\vz^{(k)})
			\end{eqnarray*}
			and
			\begin{eqnarray*}
				&&\nabla_{\vy} f(\vz^{(k)})^\top  \left(\vy^{(k+1)}(\alpha) -\vy^{(k)}\right) +\frac{M}2 \left\|\vy^{(k+1)}(\alpha) -\vy^{(k)}\right\| ^2\\
				&\le& \eta \alpha 	\left(\vp_{\vy}^{(k)} \right)^\top \nabla_{\vy} f(\vz^{(k)}).
			\end{eqnarray*}
			Therefore, we have
			\[f(\vz^{(k+1)}(\alpha)) - f(\vz^{(k)}) \le \eta \alpha 	\left(\vp^{(k)} \right)^\top \nabla f(\vz^{(k)})\]
			for any  $0<\alpha \le \hat{\alpha}$.
			By the backtracking line search rule, we have  $\alpha^{(k)}\ge \beta \hat{\alpha}$ for all $k$.  This completes the proof.
		\end{proof}

		We now prove that the objective function in \eqref{equ:opt} converges to a constant value, and any limit point is   a KKT point.
		\begin{Thm}\label{Thm:grad_goto_zero}
			Let $\vz^{(k)}$ be a sequence generated by Algorithm~\ref{Alg:R-LBFGS}.    Then we have
			\begin{equation}\label{eq:des_amount}
				f(\vz^{(k+1)}) - f(\vz^{(k)}) \le -\eta \alpha_{\min} C_L  \left\|\nabla f(\vz^{(k)})\right\|^2.
			\end{equation}
			Furthermore, the objective function  $f(\vz^{(k)})$ monotonously converges to a constant value and
			\begin{equation}
				\lim_{k\rightarrow +\infty} \left\|\nabla f(\vz^{(k)})\right\| =0.
			\end{equation}
			Namely, any {limit} point  of  Algorithm~\ref{Alg:R-LBFGS} is a KKT point of problem  \eqref{equ:opt}, which is also an M-eigenpair of the biquadratic tensor $\A$.
			
		\end{Thm}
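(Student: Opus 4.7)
The plan is to chain together the three lemmas already established. First, for the descent inequality \eqref{eq:des_amount}, I would invoke the Armijo condition \eqref{linesearch}, which gives $f(\vz^{(k+1)}) - f(\vz^{(k)}) \le \eta \alpha^{(k)} (\vp^{(k)})^\top \nabla f(\vz^{(k)})$. Then, splitting the inner product into its $\vx$ and $\vy$ parts and applying the descent condition \eqref{equ:descent} (which holds unconditionally for $\vp^{(k)}$ by Lemma~\ref{Lem:Pk}), we bound $(\vp^{(k)})^\top \nabla f(\vz^{(k)}) \le -C_L \|\nabla f(\vz^{(k)})\|^2$. Since this quantity is nonpositive and $\alpha^{(k)} \ge \alpha_{\min}$ by Lemma~\ref{Lem:stepsize_bound}, replacing $\alpha^{(k)}$ by $\alpha_{\min}$ preserves the inequality direction and yields \eqref{eq:des_amount}.

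For monotone convergence of $f(\vz^{(k)})$, the right-hand side of \eqref{eq:des_amount} is nonpositive, so $\{f(\vz^{(k)})\}$ is nonincreasing. Since $\{\vz^{(k)}\}$ always lies in the compact feasible set (the product of two unit spheres), Lemma~\ref{Lem:grad_bound} gives $|f(\vz^{(k)})| \le M$, so the sequence is bounded below and hence converges to some constant $f^*$.

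For the gradient going to zero, I would sum \eqref{eq:des_amount} telescopically from $k=0$ to $N$, obtaining
\begin{equation*}
\eta \alpha_{\min} C_L \sum_{k=0}^{N} \left\|\nabla f(\vz^{(k)})\right\|^2 \le f(\vz^{(0)}) - f(\vz^{(N+1)}) \le f(\vz^{(0)}) - f^*.
\end{equation*}
Letting $N \to \infty$ shows $\sum_{k=0}^{\infty} \|\nabla f(\vz^{(k)})\|^2 < \infty$, which forces $\|\nabla f(\vz^{(k)})\| \to 0$.

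Finally, for any limit point $\vz^* = ((\vx^*)^\top, (\vy^*)^\top)^\top$, continuity of $\nabla f$ together with $\|\nabla f(\vz^{(k)})\| \to 0$ yields $\nabla f(\vz^*) = \mathbf{0}$, while the constraints $(\vx^*)^\top \vx^* = (\vy^*)^\top \vy^* = 1$ are preserved by the Cayley-type updates \eqref{x_update}--\eqref{y_update}. By the equivalence established right before the algorithm, the triple $(\lambda^*, \vx^*, \vy^*)$ with $\lambda^* = f(\vz^*)$ satisfies the M-eigenvalue equations \eqref{e5}--\eqref{e7}, giving the KKT/M-eigenpair conclusion. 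The only mildly delicate step is confirming that the two-block descent condition in Lemma~\ref{Lem:Pk} combines correctly into the single-vector bound $-C_L \|\nabla f(\vz^{(k)})\|^2$, which follows because $\|\nabla f(\vz^{(k)})\|^2 = \|\nabla_\vx f(\vz^{(k)})\|^2 + \|\nabla_\vy f(\vz^{(k)})\|^2$; everything else is a straightforward application of the preceding lemmas.
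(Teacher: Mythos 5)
Your proposal is correct and follows essentially the same path as the paper's own proof: combine Lemma~\ref{Lem:stepsize_bound} (Armijo with $\alpha^{(k)}\ge\alpha_{\min}$) with Lemma~\ref{Lem:Pk} (the descent bound, added blockwise so that $(\vp^{(k)})^\top \nabla f(\vz^{(k)})\le -C_L\|\nabla f(\vz^{(k)})\|^2$), then use boundedness from Lemma~\ref{Lem:grad_bound} to get monotone convergence, and telescope to obtain summability of $\|\nabla f(\vz^{(k)})\|^2$. The only minor difference is that your telescoping inequality states the final bound as $\big(f(\vz^{(0)})-f^*\big)/(\eta\alpha_{\min}C_L)$ with the correct sign, whereas the paper writes $\lambda_*-f(\vz^{(0)})$ in the numerator (a harmless sign slip); otherwise the arguments coincide.
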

		\begin{proof}
			It follows from Lemma~\ref{Lem:stepsize_bound} that
			\[f(\vz^{(k+1)}) - f(\vz^{(k)}) \le \eta \alpha_{\min} 	\left(\vp^{(k)} \right)^\top \nabla f(\vz^{(k)}).\]
			Combining  with Lemma~\ref{Lem:Pk} derives \eqref{eq:des_amount}.
			In other words, the objective function  $f(\vz^{(k)})$ is  monotonously decreasing.  Lemma~\ref{Lem:grad_bound} shows that $f(\vz)$
			is lower bounded. Therefore, $f(\vz^{(k)})$ monotonously converges to a constant value $\lambda_{*}$.

			Furthermore,  summing the both hand sides of \eqref{eq:des_amount} over $k$ gives
			\begin{eqnarray*}
				\lambda_{*}-f(\vz^{0})	&=&\sum_{k=0}^{\infty} f(\vz^{(k+1)}) - f(\vz^{(k)}) \le -\eta \alpha_{\min} C_L  \sum_{k=0}^{\infty}  \left\|\nabla f(\vz^{(k)})\right\|^2.
			\end{eqnarray*}
			Therefore, we have
			\begin{eqnarray*}
				\sum_{k=0}^{\infty}  \left\|\nabla f(\vz^{(k)})\right\|^2 \le \frac{\lambda_{*}-f(\vz^{0})}{\eta \alpha_{\min} C_L},
			\end{eqnarray*}
			which implies $\lim_{k\rightarrow +\infty} \left\|\nabla f(\vz^{(k)})\right\| =0$.
			
			This completes the proof. 		
		\end{proof}

		Before demonstrating the  sequential global convergence, we first present the following lemma.
		
		\begin{Lem}\label{Lem:grad_xmx_rel}
			Let $\vz^{(k)}$ be a sequence generated by Algorithm~\ref{Alg:R-LBFGS}.    Then there exist  two positive constants $l\le u$ such that
			\begin{equation}\label{equ:nabla_z-z_bound}
				l\left\|\vz^{(k+1)} - \vz^{(k)}\right\| \le \left\|\nabla f(\vz^{(k)})\right\| \le u\left\|\vz^{(k+1)} - \vz^{(k)}\right\|.
			\end{equation}
			Here, $l = \frac{1}{2C_U}$ and $u=\frac{C_U(1+C_UM)}{2\alpha_{\min}C_L^2}$.
		\end{Lem}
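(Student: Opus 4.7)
The plan is to reduce the claim to an explicit analysis of the Cayley update formulas \eqref{x_update} and \eqref{y_update}, and then combine that with the descent inequality \eqref{equ:descent}, the bound \eqref{equ:P_upbd}, the step size bound from Lemma~\ref{Lem:stepsize_bound}, and the uniform boundedness from Lemma~\ref{Lem:grad_bound}. Writing $s_{\vx} = (\vx^{(k)})^{\top}\vp_{\vx}^{(k)}$ and $\vq_{\vx} = \vp_{\vx}^{(k)} - s_{\vx}\vx^{(k)}$ for the tangential component, the Cayley update simplifies to
\[
\vx^{(k+1)}(\alpha^{(k)}) - \vx^{(k)} \;=\; \frac{2\alpha^{(k)} \vq_{\vx} - 2(\alpha^{(k)})^{2}\|\vq_{\vx}\|^{2}\vx^{(k)}}{1+(\alpha^{(k)})^{2}\|\vq_{\vx}\|^{2}},
\]
and since $\vx^{(k)}\perp \vq_{\vx}$ this yields the clean identity
\[
\bigl\|\vx^{(k+1)} - \vx^{(k)}\bigr\|^{2} = \frac{4(\alpha^{(k)})^{2}\|\vq_{\vx}\|^{2}}{1+(\alpha^{(k)})^{2}\|\vq_{\vx}\|^{2}}.
\]
The same identity holds for the $\vy$ block with $\vq_{\vy}$.

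For the upper bound on $\|\vz^{(k+1)}-\vz^{(k)}\|$ (which gives the constant $l$), I drop the denominator, use $\|\vq_{\vx}\|\le\|\vp_{\vx}^{(k)}\|$ and $\alpha^{(k)}\le 1$ from Lemma~\ref{Lem:stepsize_bound}, and then invoke \eqref{equ:P_upbd} to get $\|\vx^{(k+1)}-\vx^{(k)}\|\le 2C_{U}\|\nabla_{\vx}f(\vz^{(k)})\|$; the analogous bound for $\vy$ and the Pythagorean identity give $\|\vz^{(k+1)}-\vz^{(k)}\|\le 2C_{U}\|\nabla f(\vz^{(k)})\|$, i.e.\ $l = 1/(2C_{U})$.

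For the lower bound (which gives $u$), the key observation is that $\nabla_{\vx}f(\vz^{(k)})$ is tangential, i.e.\ $(\vx^{(k)})^{\top}\nabla_{\vx}f(\vz^{(k)})=0$, as noted right after \eqref{opt_grad}. Consequently $\vp_{\vx}^{(k)\top}\nabla_{\vx}f(\vz^{(k)}) = \vq_{\vx}^{\top}\nabla_{\vx}f(\vz^{(k)})$, and the descent condition \eqref{equ:descent} combined with Cauchy--Schwarz yields the crucial lower bound $\|\vq_{\vx}\|\ge C_{L}\|\nabla_{\vx}f(\vz^{(k)})\|$, and similarly $\|\vq_{\vy}\|\ge C_{L}\|\nabla_{\vy}f(\vz^{(k)})\|$. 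To control the denominator of the Cayley identity from above I use $\|\vq_{\vx}\|\le\|\vp_{\vx}^{(k)}\|\le C_{U}\|\nabla_{\vx}f\|\le C_{U}M$ (the last via Lemma~\ref{Lem:grad_bound}), so $1+(\alpha^{(k)})^{2}\|\vq_{\vx}\|^{2}\le 1+C_{U}^{2}M^{2}\le (1+C_{U}M)^{2}$. Substituting these together with $\alpha^{(k)}\ge\alpha_{\min}$ into the identity produces $\|\vx^{(k+1)}-\vx^{(k)}\|\ge 2\alpha_{\min}C_{L}\|\nabla_{\vx}f\|/(1+C_{U}M)$, and combining the $\vx$ and $\vy$ blocks and using $C_{L}\le 1\le C_{U}$ to absorb constants gives the stated value $u = C_{U}(1+C_{U}M)/(2\alpha_{\min}C_{L}^{2})$.

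The only real obstacle is the algebraic reduction of the Cayley formula to the identity for $\|\vx^{(k+1)}-\vx^{(k)}\|^{2}$; once that identity is in hand, the lemma reduces to a careful bookkeeping exercise using the four prior results. A minor subtlety worth noting is that the formula displayed in \eqref{x_update}--\eqref{y_update} appears to contain $\alpha^{2}\|\vp^{(k)}\|$ where $\alpha^{2}\|\vp^{(k)}\|^{2}$ is intended (otherwise the iterate does not remain on the unit sphere); the reduction above is carried out under the corrected expression.
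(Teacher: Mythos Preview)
Your proposal is correct. The underlying strategy---estimate $\|\vx^{(k+1)}-\vx^{(k)}\|$ and $\|\vy^{(k+1)}-\vy^{(k)}\|$ separately from the Cayley update and then combine---is the same as the paper's, but the execution differs: the paper does not carry out the Cayley-transform algebra at all and simply invokes Theorem~4.6 and Lemma~4.7 of \cite{CCQ16} to obtain the block-wise inequalities
\[
\|\vx^{(k+1)}-\vx^{(k)}\|\le 2C_U\alpha^{(k)}\|\nabla_{\vx}f(\vz^{(k)})\|,\qquad
\|\vx^{(k+1)}-\vx^{(k)}\|\ge \frac{2\alpha_{\min}C_L}{C_U(1+C_UM)}\,\|\vp_{\vx}^{(k)}\|,
\]
and then passes from $\|\vp_{\vx}^{(k)}\|$ to $\|\nabla_{\vx}f(\vz^{(k)})\|$ via Cauchy--Schwarz on the descent condition, picking up the second factor $C_L$. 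Your explicit derivation of the identity $\|\vx^{(k+1)}-\vx^{(k)}\|^2=4(\alpha^{(k)})^2\|\vq_{\vx}\|^2/(1+(\alpha^{(k)})^2\|\vq_{\vx}\|^2)$ is more self-contained, and your use of the tangentiality of $\nabla_{\vx}f$ to bound $\|\vq_{\vx}\|$ directly from below actually yields the sharper constant $u=(1+C_UM)/(2\alpha_{\min}C_L)$; the paper's extra factor $C_U/C_L$ is an artefact of routing the lower bound through $\|\vp_{\vx}^{(k)}\|$ and of the form in which the cited lemma is stated. Your remark about the missing square in \eqref{x_update}--\eqref{y_update} is also well taken.
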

		\begin{proof}
			By Theorem~4.6 in \cite{CCQ16}, we have
			\begin{equation*}
				\left\{\begin{array}{c}
					\left\|\vx^{(k+1)} - \vx^{(k)}\right\| \le 2C_U\alpha^{(k)} \left\|\nabla_{\vx} f(\vz^{(k)})\right\|,\\
					\left\|\vy^{(k+1)} - \vy^{(k)}\right\| \le 2C_U\alpha^{(k)} \left\|\nabla_{\vy} f(\vz^{(k)})\right\|.
				\end{array}\right.
			\end{equation*}
			Therefore, we have
			\[\left\|\vz^{(k+1)} - \vz^{(k)}\right\| \le 2C_U\alpha^{(k)} \left\|\nabla  f(\vz^{(k)})\right\|\le 2C_U\left\|\nabla  f(\vz^{(k)})\right\|.\]
			Here, the last inequality follows from $\alpha^{(k)}\le 1$.
			This shows the left inequality in \eqref{equ:nabla_z-z_bound} holds.
			
			Furthermore, by Lemma~4.7 in \cite{CCQ16}, we have
			\begin{equation*}
				\left\{\begin{array}{c}
					\left\|\vx^{(k+1)} - \vx^{(k)}\right\| \ge \frac{2\alpha_{\min}C_L}{C_U(1+C_UM)} \left\|\vp_{\vx}^{(k)}\right\|\ge   \frac{2\alpha_{\min}C_L^2}{C_U(1+C_UM)} \left\|\nabla_{\vx} f(\vz^{(k)})\right\|,\\
					\left\|\vy^{(k+1)} - \vy^{(k)}\right\| \ge \frac{2\alpha_{\min}C_L}{C_U(1+C_UM)} \left\|\vp_{\vy}^{(k)}\right\|\ge   \frac{2\alpha_{\min}C_L^2}{C_U(1+C_UM)} \left\|\nabla_{\vy} f(\vz^{(k)})\right\|.
				\end{array}\right.
			\end{equation*}
			Therefore, we have
			\[\left\|\vz^{(k+1)} - \vz^{(k)}\right\| \ge \frac{2\alpha_{\min}C_L^2}{C_U(1+C_UM)} \left\|\nabla f(\vz^{(k)})\right\|.\]
			
			This completes the proof.
		\end{proof}
		
		Now we are ready to present   the global convergence of the sequence produced  by Algorithm~\ref{Alg:R-LBFGS}.
		\begin{Thm}\label{Thm:seq_conv}
			Let $\vz^{(k)}$ be a sequence generated by Algorithm~\ref{Alg:R-LBFGS}.  Then we have the following result,
			\begin{equation}\label{equ:Cauchy}
				\sum_{k=0}^\infty \left\|\vz^{(k+1)} - \vz^{(k)}\right\| \le +\infty.
			\end{equation}
			Namely, the sequence $\vz^{(k)}$ converges  globally to a limit point $\vz^{*}$. Furthermore,  $\vz^{*}$ is an M-eigenvector of $\A$ with the corresponding eigenvalue $\lambda^* = \A \vx^{*}\vy^{*}\vx^{*}\vy^{*}$.
		\end{Thm}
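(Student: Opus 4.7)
The plan is to combine the square-summability of the gradients, already available from Theorem~\ref{Thm:grad_goto_zero} together with Lemma~\ref{Lem:grad_xmx_rel}, with the Kurdyka--\L ojasiewicz (KL) inequality applied to the real-analytic objective $f$ restricted to the compact feasible set of \eqref{equ:opt}, so as to upgrade square-summability of $\|\nabla f(\vz^{(k)})\|$ to absolute summability of $\|\vz^{(k+1)}-\vz^{(k)}\|$.

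First I would use compactness of the feasible set $\{\vx^\top\vx=\vy^\top\vy=1\}$ to extract an accumulation point $\vz^{*}$ of $\{\vz^{(k)}\}$; by Theorem~\ref{Thm:grad_goto_zero} we have $\nabla f(\vz^{*})=0$, so $\vz^{*}$ is a KKT point of \eqref{equ:opt} and hence an M-eigenvector of $\A$ with $\lambda^{*}=f(\vz^{*})=\A\vx^{*}\vy^{*}\vx^{*}\vy^{*}$. Because $f(\vz^{(k)})$ is monotonically decreasing to a common value by Theorem~\ref{Thm:grad_goto_zero}, the residual $f(\vz^{(k)})-f(\vz^{*})$ is nonnegative and tends to zero.

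The central step is to invoke the \L ojasiewicz inequality: since $f$ is a rational function whose denominator does not vanish on the feasible set, its restriction is real-analytic, so there exist a neighborhood $U$ of $\vz^{*}$ and constants $c>0$, $\sigma\in(0,1)$ with
\[|f(\vz)-f(\vz^{*})|^{\sigma}\le c\,\|\nabla f(\vz)\|\quad \text{for all feasible }\vz\in U.\]
Setting $\varphi(t)=\tfrac{c}{1-\sigma}t^{1-\sigma}$ and exploiting its concavity, the Armijo decrease \eqref{eq:des_amount} together with the bound $\|\vz^{(k+1)}-\vz^{(k)}\|\le l^{-1}\|\nabla f(\vz^{(k)})\|$ from Lemma~\ref{Lem:grad_xmx_rel} will yield, through the standard manipulation combining concavity of $\varphi$ with sufficient decrease, the telescoping estimate
\[\|\vz^{(k+1)}-\vz^{(k)}\|\le C\bigl[\varphi(f(\vz^{(k)})-f(\vz^{*}))-\varphi(f(\vz^{(k+1)})-f(\vz^{*}))\bigr]\]
for some constant $C>0$ and all $k$ such that $\vz^{(k)}\in U$. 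Summing this bound over $k$ delivers \eqref{equ:Cauchy}. Once the series converges, $\{\vz^{(k)}\}$ is Cauchy and hence converges to a single $\vz^{*}$; continuity of $\nabla f$ and Theorem~\ref{Thm:grad_goto_zero} then force $\vz^{*}$ to be an M-eigenvector with eigenvalue $\lambda^{*}=\A\vx^{*}\vy^{*}\vx^{*}\vy^{*}$.

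The main obstacle I anticipate is the bootstrap argument ensuring that $\vz^{(k)}$ eventually stays inside the KL neighborhood $U$: combining $\|\vz^{(k+1)}-\vz^{(k)}\|\to 0$ (immediate from Theorem~\ref{Thm:grad_goto_zero} and Lemma~\ref{Lem:grad_xmx_rel}) with the telescoping estimate, one must show that some iterate enters $U$ deeply enough so that the accumulated displacements cannot push a subsequent iterate back out. Compactness of the feasible set and the strict decrease of $f$ guarantee the initial entry into $U$, and the telescoping bound controls the tail; the bookkeeping is standard but is the only delicate point of the argument.
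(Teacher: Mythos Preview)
Your proposal is correct and follows essentially the same route as the paper: both arguments combine the sufficient-decrease estimate \eqref{eq:des_amount} with the two-sided step--gradient comparison of Lemma~\ref{Lem:grad_xmx_rel} and the KL/\L ojasiewicz property of the objective to obtain finite length of the trajectory. The only difference is packaging: the paper verifies the abstract conditions (H1)--(H2) of Bolte--Sabach--Teboulle \cite{BST14} (sufficient decrease and relative error) and invokes their Theorem~1 directly, using semi-algebraicity of $f$ and the constraint set to guarantee the KL property; you instead appeal to real-analyticity for the \L ojasiewicz inequality and write out the desingularizing-function telescoping argument and the neighborhood-trapping bootstrap by hand, which is precisely the content hidden inside the cited theorem.
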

		\begin{proof}
			By Theorem~\ref{Thm:grad_goto_zero} and Lemma~\ref{Lem:grad_xmx_rel}, we may deduce the sufficient decrease property
			\[	f(\vz^{(k+1)}) - f(\vz^{(k)}) \le -\eta \alpha_{\min} C_L  l^2\left\|\vz^{(k+1)} - \vz^{(k)}\right\|^2,\]
			and the gradient lower bound for the iterates gap
			\[\left\|\nabla f(\vz^{(k)})\right\| \le \frac{1}{l}\left\|\vz^{(k+1)} - \vz^{(k)}\right\|.\]
			
			Both the objective function and the constraints in \eqref{equ:opt} are semi-algebraic functions that satisfy the KL property \cite{BST14}.
			Therefore, it follows from Theorem 1 in \cite{BST14} that \eqref{equ:Cauchy} holds.
			In other words, $\vz^{(k)}$  is a Cauchy sequence, and it  converges   globally to a limit point $\vz^{*}$.
			
			This completes the proof.
		\end{proof}

		\section{Numerical Experiments}

		\subsection{Inclusion Intervals of M-Eigenvalues}
		In this subsection, we show the bounds of M-eigenvalues presented in  Theorem~\ref{Ger} by several   examples.
		{We begin with the elastic moduli tensor taken from Example 4.2 in Zhao \cite{Zh23}.}
		
		\begin{example}\label{ex:elastic_tensor}
			Consider the elastic moduli tensor $\C=(c_{i_1j_1i_2j_2})\in BQ(2,2)$  for the Tetragonal system  with 7 elasticities  in the equilibrium equation~\eqref{eq:elasticity_tensor}, where
			\begin{eqnarray*}
				&&c_{1123} = c_{1131} = c_{2223} = c_{2231} = c_{3323} = c_{3331} = c_{3312} = c_{2331} = c_{2312} = c_{3112} = 0,\\
				&&c_{1112} = -c_{2212}, c_{2222} = c_{1111}, c_{2233} = c_{1133}, c_{3131} = c_{2323},
			\end{eqnarray*}
			with
			\[c_{1111} = 4, c_{1122} = -4, c_{1133} = -2, c_{1112} = 1, c_{3333} = 3, c_{2323} = 4, c_{1212} = 4.\]
			We then transform $\C$ into a  symmetric biquadratic  tensor $\A$
			{by $a_{i_1j_1i_2j_2} = \frac14 \left(a_{i_1j_1i_2j_2}+a_{i_2j_1i_1j_2}+a_{i_1j_2i_2j_1}+a_{i_2j_2i_1j_1}\right)$ and show the results as follows.}		
			\begin{eqnarray*}
				&&a_{1111} = 4, a_{1112} = a_{1211} = 1, a_{1121} = a_{2111} = 1, a_{1212} = 4, a_{1222} = a_{2212} = -1,\\
				&&a_{1133} = a_{1331} = a_{3113} = a_{3311} = 1, a_{1313} = 4, a_{2121} = 4, a_{2122} = a_{2221} = -1,\\
				&&a_{2222} = 4, a_{2233} = a_{2332} = a_{3223} = a_{3322} = 1, a_{2323} = 4, a_{3131} = 4, a_{3232} = 4,\\
				&&a_{3333} = 3.
			\end{eqnarray*}
			
		\end{example}
		
		The lower and upper bounds for the M-eigenvalues, as  computed by  He, Li and Wei \cite{HLW20}, Zhao \cite{Zh23}, and Theorem~\ref{Ger}, are shown in Table~\ref{tab:ex7.1}.
		Notably, since $\C$ is nonsymmetric, the results of \cite{HLW20} and  \cite{Zh23} are not applicable here.
		We also keep most entries of $\A$ unchanged and adjusted the values of $a_{1212}$,  $a_{1312}$ and $a_{1213}$, and $a_{1313}$ respectively to generate additional examples. From this table, we see that Theorem~\ref{Ger} could  generate  smaller intervals compared with other methods. In particular, for the instances with $a_{1212}=2$ and $a_{1312}=a_{1213}=2$,  Theorem~\ref{Ger} could show positive semi-definiteness, and for the instance with  $a_{1313}=2$,   Theorem~\ref{Ger} could show positive  definiteness, while the other methods may not obtain the same conclusion.
		
		\begin{table}
			\begin{center}
				\caption{Interval of M-eigenvalues in Example 7.1.   Here, `$-$' denotes the corresponding algorithm is  not applicable. Numbers in bold are relatively better.}\label{tab:ex7.1}
				\begin{tabular}{c|c|c|c}
					\hline
					& He, Li and Wei \cite{HLW20} 	& Zhao \cite{Zh23} & Theorem~\ref{Ger}\\ \hline
					$\C$ & $-$&$-$& {\bf [$-$5,13]}	\\
					$\A$ &  [$-$1,9] &  {\bf [1,7]} & {\bf [1,7]} \\
					$\A$ with $a_{1212}=2$&   [$-$3, 9] & [$-$1,7] & {\bf [0,7]}\\
					$\A$ with $a_{1312}=a_{1213}=2$&   [$-$3, 11] & [$-$0.8637,    8.8637] & {\bf [0,8]}\\
					$\A$ with $a_{1313}=2$&   [$-$2, 9] & [0,7] &{\bf [1,7]}\\
					\hline
				\end{tabular}
			\end{center}
		\end{table}

		\begin{table}
			\begin{center}
				\caption{Interval of M-eigenvalues in Example 7.2.  Numbers in bold are relatively better.}\label{tab:ex7.2}
				\begin{tabular}{c|c|c|c|c}
					\hline
					$m$ &	$n$& He, Li and Wei \cite{HLW20} 	& Zhao \cite{Zh23} & Theorem~\ref{Ger}\\ \hline
					\multirow{4}*{2} & 2 &  [$-$8.10, 21.36] &    {\bf [$-$2.37,15.65]}   & [$-$2.68,16.07] \\
					& 5 & [$-$60.73, 82.29] & [$-$34.43, 56.00] & {\bf [$-$33.60, 55.79]} \\
					& 10 & [$-$224.41, 261.26] & [$-$135.97, 172.98] & {\bf [$-$131.85, 168.69] }\\
					& 20 & [$-$827.48, 893.86] & [$-$518.89, 584.25] & {\bf [$-$496.26, 561.33] }\\
					\hline
					
					\multirow{4}*{5} & 2 &  [$-$60.75, 82.43] &    {[$-$34.32, 56.14]}   & {\bf[ -33.52, 55.84]} \\
					& 5 & [$-$598.99, 630.01] & [$-$174.55, 205.50] & {\bf [$-$143.27, 174.20] } \\
					& 10 & [$-$1821.69, 1867.11] & [$-$498.30, 544.05] & {\bf [$-$435.28, 481.81] }\\
					& 20 & [$-$5942.73, 6018.45] & [$-$1584.09, 1660.27] & {\bf [$-$1423.54, 1501.01]} \\
					\hline
				\end{tabular}
			\end{center}
		\end{table}
		
		We continue with larger scale examples.
		
		\begin{example}
			Suppose that $\A=\B+ (m+n)\I\in SBQ(m,n)$, where $\B$ is a  random symmetric biquadratic tensor whose entries are uniformly distributed random integers ranging from 1 to $m+n$, and $\I$ denotes the identity {biquadratic} tensor.
		\end{example}
		
		We compare the lower and upper bounds of M-eigenvalues obtained by   He, Li and Wei \cite{HLW20},  Zhao \cite{Zh23}, and Theorem~\ref{Ger}.  We repeat all experiments 100 times with different random tensors $\B$ and show the average results in Table~\ref{tab:ex7.2}.  It follows from this table that Theorem~\ref{Ger} could return a { smaller} interval  on average {in most cases}.

		We also compare the quality of the intervals obtained by Zhao \cite{Zh23} and Theorem~\ref{Ger} in the following way. Denote $l_z$ and $u_z$ as the  lower and upper bounds of M-eigenvalues obtained by Zhao \cite{Zh23}, and  $l$ and $u$ as the  lower and upper bounds of M-eigenvalues obtained by Theorem~\ref{Ger}, respectively.  We continue to show the	the number cases that $l_z =, >, < l$ and  $u_z =, >, <u$, respectively, in Table~\ref{tab:ex7.2_2}.  	It follows from Table~\ref{tab:ex7.2_2} that Theorem~\ref{Ger} could return a smaller interval in  most cases. This phenomenon becomes more obvious when $m$ and $n$ are large.

		\begin{table}
			\begin{center}
				\caption{A summary of the number of instances for the relationships between the lower and upper bounds obtained by Zhao \cite{Zh23} and   Theorem 4.1, as illustrated  in Example 7.2.}\label{tab:ex7.2_2}
				\begin{tabular}{c|c|ccc|ccc}
					\hline
					$m$ & $n$ & $l_{z}=l$ & $l_z<l$ &  $l_z>l$ & $u_{z}=u$ & $u_z<u$ & $u_z>u$
					\\ \hline
					\multirow{4}*{2} & 2 &  27  &    9  &   {\bf 64}&     18  &   {\bf 75}  &    7      \\
					
					& 5&    4   & {\bf  51} &  45  &   3 &   43  &  {\bf 54}      \\
					& 10&   0   &  {\bf 78}  &  22  &   0  &  18 &   {\bf 82}      \\
					& 20 & 0  &  {\bf 95}  &   5  &   0  &   6  & {\bf 94}      \\
					\hline
					\multirow{4}*{5} & 2 &   3 &   {\bf  65}  &  32  &   3  &  45  &  {\bf 52}       \\
					&	5 &     0  &  {\bf 100} &   0  &   0 &    0   &{\bf 100}       \\
					&	10 &    0  &  {\bf 100}  &   0  &   0  &   0  & {\bf 100}      \\
					
					\hline
				\end{tabular}
			\end{center}
		\end{table}

		\subsection{Computing the Smallest M-eigenvalue}
		
		In this subsection, we present the numerical results for computing the smallest M-eigenvalues using  Algorithm~\ref{Alg:R-LBFGS}. 				For all   experiments,  {we generate   $\vx\in \Re^m$, $\vy\in \Re^n$ randomly by the  normal distribution and then normalize them to get the initial points by $\vx^{(0)} = \frac{\vx}{\|\vx\|}$ and  $\vy^{(0)} = \frac{\vy}{\|\vy\|}$, respectively.}
		To ensure robustness, each experiment is repeated  {20} times with different initial points, and the average results are reported.
		The parameters are set as follows:   $\eta=10^{-3}$, $k_{\max}=1000$,  $\epsilon_1 = 10^{-6}$, $\epsilon_2=  10^{-6}$, $\epsilon_3= 10^{-16}$, $C_L=10^{-16}$, and $C_U=10^{16}$.

		\begin{figure}
			\begin{center}
				\includegraphics[width=\linewidth]{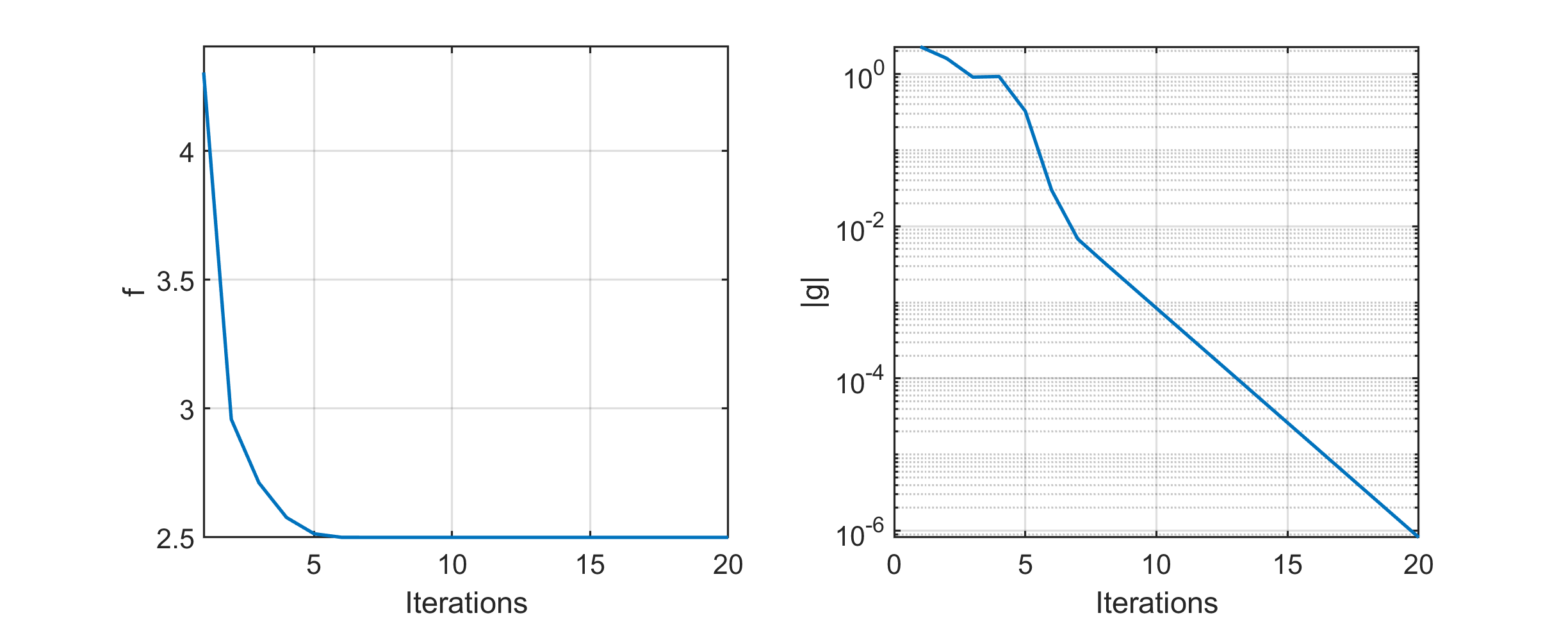}
			\end{center}
			\caption{The iterative procedure of Algorithm 1 for computing M-eigenvalues of the biquadratic tensor $\A$ in Example 7.1. The left panel shows the objective function, while the right panel displays the norm of the gradient.}\label{fig:Meig_Ex71}
		\end{figure}
		
		We first consider  the biquadratic tensor in Example~\ref{ex:elastic_tensor}.
		The average iteration count for implementing Algorithm~\ref{Alg:R-LBFGS}  is  {18.55}, the  average CPU time consumed is  $1.08\times 10^{-1}$ seconds, and the  average  value of $\|\nabla f(\vz^{\infty})\|$ is found to be $3.72\times 10^{-7}$.  {Among all 20 repeated trials, an M-eigenvalue of 2.5 was achieved in 19 cases.}  We also show the values of M-eigenpairs in Table~\ref{Tab_Meig_Ex7.1}.
		 Since for any M-eigenpair $(\lambda, \vx,\vy)$,   its variants  $(\lambda, -\vx,\vy)$, $(\lambda, \vx,-\vy)$, $(\lambda, -\vx,-\vy)$ are also M-eigenpairs of $\A$ , we  exclude these redundant M-eigenpairs from our results.
			Furthermore, we present the iterative procedure of Algorithm~\ref{Alg:R-LBFGS} for  computing M-eigenvalues in Fig.~\ref{fig:Meig_Ex71}. It is evident that our Riemannian LBFGS algorithm exhibits rapid convergence.

		\begin{table}
			\caption{M-eigenpairs computed from Example~\ref{ex:elastic_tensor}.}\label{Tab_Meig_Ex7.1}
			\begin{center}
				\begin{tabular}{c | cccccc }
					\hline
					$\lambda$ & $x_1$  & $x_2$ & $x_3$  & $y_1$  & $y_2$ & $y_3$ \\
					\hline
					2.5000 & 0.6533 & -0.2706 & 0.7071 & -0.6533 & 0.2706 & 0.7071 \\
					2.5000 & -0.6533 & 0.2706 & 0.7071 & 0.6533 & -0.2706 & 0.7071 \\
					2.5000 & 0.2706 & 0.6533 & 0.7071 & 0.2706 & 0.6533 & -0.7071 \\
					2.5000 & 0.2706 & 0.6533 & -0.7071 & 0.2706 & 0.6533 & 0.7071 \\
					3.0000 & 0.6088 & -0.7933 & 0.0003 & -0.9915 & -0.1304 & 0.0003 \\
					\hline
				\end{tabular}
			\end{center}
		\end{table}

		\begin{table}
			\begin{center}
				\caption{Numerical results for computing M-eigenvalues of the covariance biquadratic tensors in Example 7.3 by Algorithm 1.}\label{tab:ex7.3}
				\begin{tabular}{c|c|c|c|c|c|c }
					\hline
					$m$ & $n$ & $\lambda$ & Rate & Iteration & Time (s) & Res \\ 	
					\hline
					\multirow{4}*{5}        & 5 & 7.8142 & 35\% & 30.75 & 3.31e-01 & 2.15e-07 \\
					& 10 & 7.7351 & 25\% & 44.80 & 2.95e-01 & 2.22e-07 \\
					& 20 & 7.4446 &40\% & 54.90 & 4.42e-01 & 2.99e-07 \\
					& 30 & 7.3370 & 20\% & 57.35 & 5.27e-01 & 5.26e-07 \\
					\hline
					\multirow{4}*{10}
					& 5 & 7.7211 & 45\% & 43.05 & 3.19e-01 & 2.64e-07 \\
					& 10 & 7.5963 & 20\% & 47.65 & 3.52e-01 & 4.17e-07 \\
					& 20 & 7.3332 & 15\% & 64.70 & 6.43e-01 & 5.42e-07 \\
					& 30 & 7.2051 & 5\% & 80.70 & 1.10e+00 & 5.17e-07 \\
					\hline
				\end{tabular}
			\end{center}
		\end{table}
		
		At last, we present an example from statistics.
		
		\begin{example}
			We generate a sequence of 10000 independent and identically distributed  random matrices $X^{(t)}\in\Re^{m\times n}$ uniformly from the interval $[0,10]$.   Subsequently,  {we compute the covariance tensor by \eqref{eq:4cov_est}.}
		\end{example}
		
	 We show the numerical results for computing M-eigenvalues of the covariance biquadratic tensors by Algorithm~\ref{Alg:R-LBFGS} in  Table~\ref{tab:ex7.3}.
			Here,  `$\lambda$' represents the smallest M-eigenvalue obtained from 20 repeated trials, and 'Rate' denotes the success rate of returning the smallest M-eigenvalue. Additionally, 'Iteration', 'Time (s)', and 'Res' correspond to the average values of the number of iterations, the CPU time consumed, and the norm of the gradient $\|\nabla f(\vz)\|$ at the final iteration, respectively.
			
			It is confirmed that all the smallest eigenvalues obtained are positive, which aligns with Proposition~\ref{prop:psd_cov}, indicating that the covariance tensor is positive {definite}.
			The success rate remains high for small-dimensional problems but gradually decreases as the values of  $m$ and $n$ increase. This observation is consistent with the theoretical expectation that the number of M-eigenvalues for a higher-dimensional biquadratic tensor generally increases, leading to more KKT points and potentially greater difficulty in converging to the global optimal solution.
			Furthermore,  the number of iterations is consistently below 100, and the gradient norm is smaller than the specified tolerance of $10^{-6}$ in all cases.
			At last, we present the iterative procedure of Algorithm~\ref{Alg:R-LBFGS} for  computing M-eigenvalues of  the    covariance  tensor with $m=10$ and $n=30$  in Fig.~\ref{fig:Meig_Ex73}. It is evident that our Riemannian LBFGS algorithm exhibits rapid convergence.

		\begin{figure}
			\begin{center}
				\includegraphics[width=\linewidth]{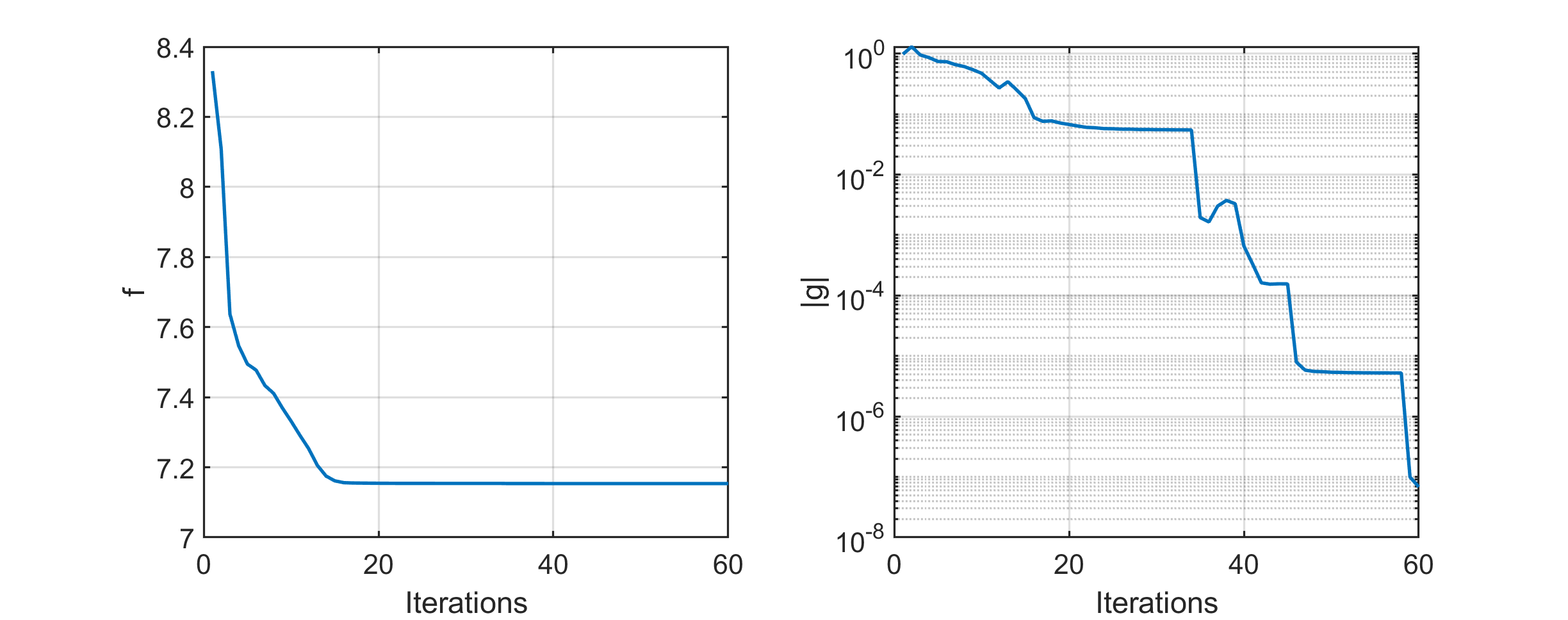}
			\end{center}
			\caption{The iterative procedure of Algorithm 1 for computing M-eigenvalues of a  nonsymmetric covariance  tensor with $m=10$ and $n=30$ in Example 7.3. The left panel shows the objective function, while the right panel displays the norm of the gradient.}\label{fig:Meig_Ex73}
		\end{figure}
		

		\section{Final Remarks}

		Motivated by applications, in this paper, we extend the definition of M-eigenvalues of symmetric biquadratic tensors to nonsymmetric biquadratic tensors, show that M-eigenvalues always exist, a biquadrtic tensor is positive semi-definite (definite) if and only if its M-eigenvalues are nonnegative (positive). We present a Gershgorin-type theorem, several structured biquadratic tensors, and an algorithm for computing the smallest M-eigenvalue of a biquadratic tensor. {We may} explore further in the following three topics.
		
		1. Study more precise bounds for M-eigenvalues of biquadratic tensors and construct more efficient algorithms for computing the smallest M-eigenvalue of a biquadratic tensor.
		
		2. Identify more classes of structured biquadratic tensors.
		
		3. Study the SOS problem of biquadratic tensors.

		\bigskip	
		
		
		{{\bf Acknowledgment}}
		This work was partially supported by Research  Center for Intelligent Operations Research, The Hong Kong Polytechnic University (4-ZZT8),    the National Natural Science Foundation of China (Nos. 12471282 and 12131004), the R\&D project of Pazhou Lab (Huangpu) (Grant no. 2023K0603), and the Fundamental Research Funds for the Central Universities (Grant No. YWF-22-T-204).
		
		

		{{\bf Data availability} Data will be made available on reasonable request.

			{\bf Conflict of interest} The authors declare no conflict of interest.}

		


	\end{document}